\documentclass[12pt]{amsart}

\usepackage[letterpaper,margin=1in]{geometry}
\usepackage{amsmath,amssymb,amsthm,mathtools}
\usepackage[T1]{fontenc}
\usepackage{lmodern}
\usepackage{microtype}
\usepackage{hyperref}
\usepackage{enumitem}

\hypersetup{colorlinks=true,linkcolor=black,citecolor=black,urlcolor=black}
\allowdisplaybreaks

\newtheorem{theorem}{Theorem}[section]
\newtheorem{lemma}[theorem]{Lemma}
\newtheorem{proposition}[theorem]{Proposition}
\newtheorem{corollary}[theorem]{Corollary}
\theoremstyle{remark}
\newtheorem{remark}[theorem]{Remark}

\newcommand{\Z}{\mathbb Z}
\newcommand{\Q}{\mathbb Q}

\newcommand{\C}{\mathbb C}
\newcommand{\F}{\mathbb F}
\newcommand{\Hh}{\mathbb H}
\newcommand{\Res}{\operatorname{Res}}
\newcommand{\CT}{\operatorname{CT}}
\newcommand{\Sym}{\operatorname{Sym}}
\newcommand{\Ha}{\operatorname{Ha}}

\title{Supercongruences for Domb Numbers}
\author{Kevin Calderon}
\date{2026}
\address{Departamento de Matem\'aticas, Facultad de Ciencias, Universidad Nacional Aut\'onoma de M\'exico, 04510 Ciudad de M\'exico, Mexico}
\email{kevincalderon@ciencias.unam.mx}
\subjclass[2020]{Primary 11B65; Secondary 11F33, 11G15, 11B39}
\keywords{Domb numbers, supercongruences, modular forms, complex multiplication, Frobenius, Hasse invariant, Lucas sequences, Atkin--Lehner involutions}

\begin{document}
\maketitle

\begin{abstract}
We prove two supercongruence statements proposed by Z.-W. Sun for Domb numbers. The first concerns the unweighted truncated sum
\[
S_p=\sum_{n=0}^{p-1}D_n
\]
and is governed by the two proper ideal classes of the order of discriminant $-60$ in $\Q(\sqrt{-15})$. We prove Sun's conjectured congruences modulo $p^2$ and obtain stronger formulas modulo $p^3$ in the split cases. The second concerns Domb sums weighted by the Lucas sequences associated with $X^2-11X+1$. Sun's original formulation has the exceptional counterexample $p=3$; after excluding this prime, we prove the corrected statement, including its modulo-$p^3$ refinement when both relevant quadratic characters are $+1$.

Both results arise from one modular mechanism. The twisted Domb series is a modular period on $\Gamma_0(6)+\langle 3\rangle$ and is related by a quadratic pullback to Zagier's sporadic $C$-sequence. At primes inert in the imaginary quadratic field underlying the relevant complex multiplication, the truncated $C$-period is the Hasse invariant and vanishes at supersingular reduction; a finite pullback then gives divisibility modulo $p^2$. At split primes, explicitly constructed determinant-$1$ or determinant-$3$ matrices identify the Beukers unit root, yielding congruences modulo $p^3$. The two applications correspond respectively to orders of complex multiplication of discriminants $-60$ and $-156$.
\end{abstract}

\section{Introduction}

For $n\ge 0$, the Domb numbers are
\begin{equation}
D_n=\sum_{k=0}^n \binom{n}{k}^2\binom{2k}{k}\binom{2n-2k}{n-k}.
\tag{1.1}
\end{equation}
They begin
\[
D_0=1,\qquad D_1=4,\qquad D_2=28,\qquad D_3=256,\qquad D_4=2716,\ldots.
\]
Historically, these integers arose in the combinatorics of crystal lattices in Domb's study of cooperative phenomena in statistical mechanics \cite{Domb}. In particular, they are naturally connected with closed lattice configurations on the diamond lattice. This physical origin suggests that the generating series should be regarded as a lattice Green-function-type period rather than merely as a formal power series. There is a second and different description of the same integers. Introduce the ordinary generating function
\begin{equation}
D(z)=\sum_{n=0}^{\infty}D_nz^n.
\tag{1.2}
\end{equation}
The Domb numbers satisfy
\begin{equation}
(n+1)^3D_{n+1}=2(2n+1)(5n^2+5n+2)D_n-64n^3D_{n-1},
\tag{1.3}
\end{equation}
with $D_0=1$ and $D_1=4$. If $\theta_z=z\,d/dz$, then (1.3) is equivalent to
\begin{equation}
\Bigl[\theta_z^3-2z(2\theta_z+1)(5\theta_z^2+5\theta_z+2)+64z^2(\theta_z+1)^3\Bigr]D(z)=0.
\tag{1.4}
\end{equation}
At first sight the order-three equation (1.4) might suggest an intrinsically rank-three origin. The essential fact is that the equation is a symmetric square. A classical hypergeometric representation is
\begin{equation}
D(z)=\frac{1}{1-4z}\,{}_3F_2\!\left(\begin{matrix}\frac13,\frac12,\frac23\\1,1\end{matrix};\frac{108z^2}{(1-4z)^3}\right).
\tag{1.5}
\end{equation}
Clausen's identity gives
\[
{}_3F_2\!\left(\begin{matrix}\frac13,\frac12,\frac23\\1,1\end{matrix};u\right)
=\left[{}_2F_1\!\left(\begin{matrix}\frac16,\frac13\\1\end{matrix};u\right)\right]^2.
\]
Hence
\begin{equation}
D(z)=H(z)^2,
\tag{1.6}
\end{equation}
where
\begin{equation}
H(z)=\frac{1}{\sqrt{1-4z}}\,{}_2F_1\!\left(\begin{matrix}\frac16,\frac13\\1\end{matrix};\frac{108z^2}{(1-4z)^3}\right).
\tag{1.7}
\end{equation}
The function $H$ satisfies
\begin{equation}
\Bigl[\theta_z^2-2z(10\theta_z^2+5\theta_z+1)+16z^2(2\theta_z+1)^2\Bigr]H(z)=0.
\tag{1.8}
\end{equation}
Thus the third-order Domb equation is the symmetric square of a rank-two period equation, after the indicated pullback and normalization. For a one-parameter family of elliptic curves, the first cohomology has rank two and its periods satisfy a second-order Picard--Fuchs equation. In the present situation, $H(z)$ is a normalized elliptic period, and (1.6) places $D(z)$ in its symmetric square. The passage from order two to order three is the representation-theoretic identity
\[
\dim\Sym^2(\C^2)=3.
\]

The original lattice interpretation and the period interpretation are compatible. If $P$ is a Laurent polynomial and $a_n=\CT(P^n)$, then
\[
\sum_{n\ge0}a_nt^n=\CT\!\left(\frac{1}{1-tP}\right),
\]
and the constant term can be written as a torus integral,
\[
\sum_{n\ge0}a_nt^n=\frac{1}{(2\pi i)^r}
\int_{|z_1|=\cdots=|z_r|=1}
\frac{1}{1-tP(z_1,\ldots,z_r)}\frac{dz_1}{z_1}\cdots\frac{dz_r}{z_r}.
\]
Such an integral is a period of an algebraic family.

Let $q=e^{2\pi i\tau}$, $\tau\in\Hh$, and let $\eta$ denote Dedekind's eta function. Define
\begin{equation}
t(\tau)=\left(\frac{\eta(2\tau)\eta(6\tau)}{\eta(\tau)\eta(3\tau)}\right)^6
\tag{1.9}
\end{equation}
and
\begin{equation}
h(\tau)=\frac{\eta(\tau)^2\eta(3\tau)^2}{\eta(2\tau)\eta(6\tau)}.
\tag{1.10}
\end{equation}
The twisted Domb series
\begin{equation}
B(t)=\sum_{n\ge0}(-1)^nD_nt^n=D(-t)
\tag{1.11}
\end{equation}
satisfies the modular identity
\begin{equation}
B(t(\tau))=h(\tau)^2.
\tag{1.12}
\end{equation}
Equivalently,
\[
\sum_{n=0}^{\infty}(-1)^nD_n
\left(\frac{\eta(2\tau)\eta(6\tau)}{\eta(\tau)\eta(3\tau)}\right)^{6n}
=
\left(\frac{\eta(\tau)^2\eta(3\tau)^2}{\eta(2\tau)\eta(6\tau)}\right)^2.
\]
Thus the Domb numbers are not the Fourier coefficients of the modular form on the right. Rather, they are the Taylor coefficients obtained when this weight-two modular period is expanded in the natural level-$6$ coordinate $t$. The square root $h(\tau)$ is a weight-one modular period, while $h(\tau)^2$ has weight two. Formula (1.12) is the modular counterpart of (1.6).

The form needed for the supercongruence problem is a second level-$6$ description. Let
\begin{equation}
c_n=\sum_{k=0}^n\binom{n}{k}^2\binom{2k}{k},\qquad
A(x)=\sum_{n\ge0}c_nx^n,
\tag{1.13}
\end{equation}
and put
\begin{equation}
\Delta(x)=(1-x)(1-9x),\qquad w(x)=\frac{x}{\Delta(x)}.
\tag{1.14}
\end{equation}
Then
\begin{equation}
B(w(x))=\Delta(x)A(x)^2.
\tag{1.15}
\end{equation}
This identity transfers Frobenius information for the rank-two elliptic family attached to $A$ to the twisted Domb period $B$.

The symmetric-square structure has a parallel interpretation in characteristic $p$. For an elliptic curve over a finite field, Frobenius acts on its first cohomology with two eigenvalues $\alpha_p,\beta_p$. On the symmetric square the eigenvalues become
\[
\alpha_p^2,\qquad \alpha_p\beta_p,\qquad \beta_p^2.
\]
Thus the same rank-two-to-rank-three passage that appears analytically in (1.6) also governs the $p$-adic arithmetic of the Domb period. A second ingredient is complex multiplication. If $\tau_0\in\Hh$ is imaginary quadratic, then $\C/(\Z+\tau_0\Z)$ has complex multiplication by an order in an imaginary quadratic field. At such parameters, modular functions take algebraic values and the Frobenius behavior is controlled by the splitting of $p$ in that field. At a prime inert in the imaginary quadratic field underlying the complex multiplication, good reduction is supersingular. The Hasse invariant therefore vanishes. In the level-$6$ family considered here, the truncated $C$-period $A_p$ is the Hasse scalar, and the finite form of (1.15) forces the associated truncated Domb period to vanish modulo $p^2$. At a split prime, the reduction is ordinary and one Frobenius eigenvalue is a $p$-adic unit. Beukers' excellent-Frobenius formalism identifies the truncated Domb period, at the relevant point of complex multiplication, with the square of this unit root modulo $p^3$. The determinant-$1$ and determinant-$3$ matrices constructed below encode the two possible ideal classes and the Atkin--Lehner action. For the unweighted sum the relevant order has discriminant $-60$ in $\Q(\sqrt{-15})$. For the Lucas-weighted sums the relevant order has discriminant $-156$ in $\Q(\sqrt{-39})$.

\subsection{Notation and the two statements}

For a prime $p$ define
\begin{equation}
S_p:=\sum_{n=0}^{p-1}D_n,\qquad
P_p(X):=\sum_{n=0}^{p-1}D_nX^n.
\tag{1.16}
\end{equation}
We also use the Lucas sequences attached to $X^2-11X+1$,
\begin{equation}
u_0=0,\quad u_1=1,\quad u_{n+1}=11u_n-u_{n-1},\qquad
v_0=2,\quad v_1=11,\quad v_{n+1}=11v_n-v_{n-1},
\tag{1.17}
\end{equation}
and the weighted sums
\begin{equation}
U_p:=\sum_{n=0}^{p-1}D_nu_n,\qquad
V_p:=\sum_{n=0}^{p-1}D_nv_n.
\tag{1.18}
\end{equation}
Put
\begin{equation}
\alpha=\frac{11+3\sqrt{13}}2,\qquad
\beta=\frac{11-3\sqrt{13}}2.
\tag{1.19}
\end{equation}
Then
\[
\alpha+\beta=11,\qquad \alpha\beta=1,\qquad \alpha-\beta=3\sqrt{13}.
\]
For later use, define
\begin{equation}
A_p(x)=\sum_{n=0}^{p-1}c_nx^n,\qquad
B_p(t)=\sum_{n=0}^{p-1}(-1)^nD_nt^n.
\tag{1.20}
\end{equation}

Sun's Conjecture 76 \cite{SunOpen} is Corollary \ref{cor:unweighted}. The following theorem adds the modulo-$p^3$ split cases.

\begin{theorem}\label{thm:unweighted}
Let $p>5$ be prime.
\begin{enumerate}[label=(\roman*)]
\item If $p=x^2+15y^2$, then
\begin{equation}
S_p\equiv 4x^2-2p-\frac{p^2}{4x^2}\pmod{p^3}.
\tag{1.21}
\end{equation}
\item If $p=3x^2+5y^2$, then
\begin{equation}
S_p\equiv 2p-12x^2+\frac{p^2}{12x^2}\pmod{p^3}.
\tag{1.22}
\end{equation}
\item If $\left(\frac{-15}{p}\right)=-1$, then
\begin{equation}
S_p\equiv0\pmod{p^2}.
\tag{1.23}
\end{equation}
\end{enumerate}
The denominators in (1.21) and (1.22) are interpreted modulo $p^3$; they are invertible because $p>5$ and $p\nmid x$.
\end{theorem}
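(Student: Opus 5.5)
The plan is to read $S_p=B_p(-1)$ as a truncated twisted Domb period evaluated at the point $t=-1$, and to reach that value through the pullback (1.15). So first we solve $w(x)=x/((1-x)(1-9x))=-1$, i.e.\ $x=-(1-x)(1-9x)$, which gives $9x^2-9x+1=0$ and $x_\pm=(9\pm\sqrt{45})/18=(3\pm\sqrt5)/6$. These points should be CM values of the Hauptmodul for Zagier's $C$-sequence $c_n$, with CM by the order of discriminant $-60$. We will identify the corresponding $\tau_0$ explicitly, for instance $\tau_0=\sqrt{-15}/6$ together with an Atkin--Lehner translate. The first step is a truncated form of (1.15):
\[
B_p(w(x))\equiv \Delta(x)A_p(x)A_p(x^{\sigma})\ \text{or}\ \Delta(x)A_p(x)^2
\]
modulo $p^2$ as polynomials, after clearing the denominators $\Delta^{p}$ appropriately. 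We prove it by comparing coefficients of $x^n$ for $n<p$, using that $c_n$ and $D_n$ satisfy the Atkin--Swinnerton-Dyer / Dwork-type truncation congruences $A(x)\equiv A_p(x)A(x^p)/A_p(x^p)$. Since $B_p(w(x))$ involves $w^n$ with $n\le p-1$, only finitely many terms matter, and the pullback is finite.

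For (iii), the prime $p$ is inert in $\Q(\sqrt{-15})$, so the CM elliptic curve at $x_\pm$ has supersingular reduction. Then $A_p(x_\pm)$, which is the Hasse invariant of the $C$-family up to a unit, is $\equiv0\pmod p$. Substituting into the truncated pullback gives $S_p=B_p(-1)\equiv \Delta(x_\pm)A_p(x_\pm)^2\equiv0\pmod{p^2}$. We will check that $\Delta(x_\pm)$ is $p$-integral, using $p>5$, and that the congruence survives in $\Z_p[\sqrt5]$ after taking norms or traces so that it descends to $\Z$.

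For (i) and (ii), $p$ splits and the reduction is ordinary. We will use Beukers' excellent-Frobenius formalism, in the form in which this paper uses it. The truncation $B_p(t_0)$ at a CM point is congruent modulo $p^3$ to the square of the unit root $\lambda_p(t_0)$ of the weight-one form $h$, up to the explicit Frobenius-compatible factors, plus the complementary term coming from $\alpha\beta=p$. Concretely we aim for
\[
S_p\equiv \lambda^2+\tfrac{p^2}{\lambda^2}-2\epsilon p \pmod{p^3}\quad\text{or the analogous symmetric-square expression,}
\]
where $\lambda=\pm2x$ or $\lambda=\pm(x+y\sqrt{-15})$-type unit. The principal class $p=x^2+15y^2$ gives a Frobenius $\pi=x+y\sqrt{-15}$ with norm $p$, so $\pi^2+\bar\pi^2=2x^2-30y^2=4x^2-2p$. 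The unit-root square truncates to $4x^2-p^2/(4x^2)$-shaped terms, and we will verify the match with (1.21) via $(2x)^2$. For the nonprincipal class $p=3x^2+5y^2$, we use the determinant-$3$ matrix realizing the Atkin--Lehner involution $W_3$ and the class $[3,0,5]$. The Frobenius then becomes $(3x+y\sqrt{-15})/\sqrt{-3}$-type, which produces the sign change and the $12x^2$ in (1.22).

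The main obstacle is the modulo-$p^3$ step. We need to construct the explicit matrices in $\Gamma_0(6)+\langle3\rangle$ (determinant $1$ or $3$) fixing $\tau_0$, compute the resulting unit root $h(\tau_0)$-ratio exactly as $\pm2x$ or $\pm\sqrt{-3}\cdot$(something), and justify that Beukers' theorem gives the truncation modulo $p^3$ rather than $p^2$ at $t=-1$. The last point requires that $t=-1$ is an excellent lifting point, meaning it is fixed by the Frobenius lift $t\mapsto t^{\sigma}$ coming from the CM modular equation. I would first try to verify that $-1$ is a rational CM value of $t$ directly from the eta-quotient at $\tau_0$, and then apply Beukers' criterion.
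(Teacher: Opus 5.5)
Your overall architecture matches the paper's: $S_p=B_p(-1)$, the roots of $9x^2-9x+1$ as CM points of discriminant $-60$, the Hasse invariant for (iii), and Beukers' unit root with determinant-$1$/determinant-$3$ matrices for (i)--(ii). However, the step that actually produces the factor $p^2$ in (iii) is missing, and both of your candidate finite pullbacks are false.

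\textbf{The finite pullback in (iii).} The version with $A_p(x^\sigma)$ already fails modulo $p$: there the correct relation is $\Delta(x)^{p-1}B_p(w(x))\equiv A_p(x)^2$, whereas $A_p(x^\sigma)\equiv A_p(x)^p$. In its best form, $\Delta(x)^{p-1}B_p(w(x))\equiv A_p(x)^2\pmod{p^2}$, it fails because the $x$-coefficient of the difference is $-10p$.

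Comparing coefficients of $x^n$ for $n<p$ cannot establish anything here. The congruence $B_p(w(x))\equiv\Delta(x)A_p(x)^2\pmod{x^p}$ holds exactly over $\Z$, but evaluation at $\widetilde\xi$ involves all coefficients up to degree $2p-2$. Nor can an identity containing $x^\sigma$ be evaluated at a supersingular point, where the Frobenius lift does not converge.

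What is needed is (2.9): $\Delta^{p-1}B_p(w)=A_p^2+pC_p$ with $A_p\mid C_p$ in $\F_p[x]$, so that the $O(p)$ correction also vanishes at $\widetilde\xi$. The paper gets this in three steps. First, it combines (2.3), (2.4) and the pullback into (2.14). Second, it uses Straub's congruence (2.5) to find the first-order term of $x^\sigma=x^p+p\rho_p$, namely $\rho_p\equiv x^pA_p^*/A_p$; this turns (2.14) into $\Delta(x^p)C_p\equiv A_p\bigl(E_pA_p-x^p\Delta'(x^p)A_p^*\bigr)$ in $\F_p[x]$. Third, it removes $\Delta(x^p)$ using $\gcd(A_p,\Delta(x^p))=1$, which follows from $A_p(1),A_p(1/9)\not\equiv0\pmod p$. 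You also need $\Delta(\widetilde\xi)=-\widetilde\xi$ to be a $p$-adic unit, not merely integral.

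\textbf{The split cases (i)--(ii).} Your route is the paper's, but the target is misstated. Lemma~\ref{lem:unitroot} gives $S_p\equiv\chi(\gamma)\pi^2/e\pmod{p^3}$ with no complementary term. The unit root is $\pi=x+y\sqrt{-15}$ itself, which is congruent to $2x$ only modulo $p$. The terms $-2p-p^2/(4x^2)$ arise purely from $\pi^2=(\pi+\bar\pi)^2-2\pi\bar\pi-\bar\pi^2$ with $\bar\pi=p/\pi\equiv p/(2x)\pmod{p^2}$. Your expression $\lambda^2+p^2/\lambda^2-2\epsilon p$ reproduces (1.21) neither for $\lambda=2x$ (wrong sign on $p^2/(4x^2)$) nor for $\lambda=\pi$.

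The matrices must send $\tau_0$ to $(\tau_0+k)/p$ as in Lemma~\ref{lem:matrix}; they should not fix $\tau_0$. The sign in (1.22) requires $\chi(W_3)=-1$, which the paper proves by evaluating $\Theta|_2W$ at the fixed point of $W=\left(\begin{smallmatrix}3&1\\-12&-3\end{smallmatrix}\right)$. Your guess $\bigl((3x+y\sqrt{-15})/\sqrt{-3}\bigr)^2=-\pi^2/3$ is the correct value, but it rests on this fact.

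Finally, $\sqrt{-15}/6$ is a root of $12\tau^2+5$, which has discriminant $-240$, not $-60$. The paper uses $\tau_0=(3+\sqrt{-15})/24$, a root of $24\tau^2-6\tau+1$, whose value $t(\tau_0)=-1$ is certified in Appendix~A.
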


\begin{corollary}\label{cor:unweighted}
For every prime $p>5$,
\[
S_p\equiv
\begin{cases}
4x^2-2p\pmod{p^2},&p=x^2+15y^2,\\
2p-12x^2\pmod{p^2},&p=3x^2+5y^2,\\
0\pmod{p^2},&\left(\frac{-15}{p}\right)=-1.
\end{cases}
\]
\end{corollary}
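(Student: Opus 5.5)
The proof of the Corollary is a reduction of Theorem \ref{thm:unweighted} modulo $p^2$, so I describe how I would prove the theorem and then read the corollary off from it.

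\textbf{Locating the CM point.} Since $S_p=B_p(-1)$, the relevant parameter is $t=-1$. I would solve $w(x)=-1$, that is $x=-(1-x)(1-9x)$, or
\[
9x^2-9x+1=0,\qquad x_\pm=\frac{9\pm\sqrt{45}}{18}=\frac{3\pm\sqrt5}{6}.
\]
These are real quadratic irrationalities in $\Q(\sqrt5)$. Next I would identify $\tau_0$ with $t(\tau_0)=-1$ as a CM point of discriminant $-60$. Concretely, I would look for $\tau_0=\sqrt{-15}/6$-type points fixed by an Atkin--Lehner element of $\Gamma_0(6)+\langle 3\rangle$. The determinant-$1$ and determinant-$3$ matrices then correspond to the two forms $x^2+15y^2$ and $3x^2+5y^2$.

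\textbf{Passing to truncated sums.} Write $N(x)=x$ and $\Delta(x)=(1-x)(1-9x)$. The finite version of (1.15) compares $B_p(w(x))$ with $\Delta(x)A_p(x)^2$ modulo $p^2$. It uses the standard Dwork/Beukers congruences for $A$ together with the fact that $c_n$ and $D_n$ have their $p$-adic expansions supported appropriately. Because $w$ has a double pole structure of degree $2$, the truncation errors cancel to order $p^2$ once $A_p(x)\equiv 0\pmod p$.

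\textbf{Inert case ($\left(\frac{-15}{p}\right)=-1$).} The elliptic curve of the $C$-family at $x_\pm$ has CM by $\Q(\sqrt{-15})$, so its reduction is supersingular. Hence the Hasse invariant $A_p(x_\pm)$ vanishes mod $p$ (working in $\F_{p^2}$ if $\sqrt5\notin\F_p$). Symmetrizing over $x_+$ and $x_-$ then gives $B_p(-1)\equiv0\pmod{p^2}$.

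\textbf{Split case.} In the split case, Beukers' excellent-Frobenius formalism gives $B_p(t)\equiv \Delta\cdot u_p^2$-type congruences mod $p^3$, where $u_p$ is the unit root. At a CM point, the Frobenius eigenvalue is $\pm\pi$, where $p=\pi\bar\pi$ in $\Z[\sqrt{-15}]$. For $p=x^2+15y^2$ one has $\pi=x+y\sqrt{-15}$. Then $\pi+\bar\pi=2x$, and the unit root $\lambda$ satisfies $\lambda^2-2x\lambda+p=0$ up to twist and weight normalization. The resulting expression for the weight-two value is $\lambda^2$ (times a sign or character). Expanding $\lambda=x+\sqrt{x^2-p}$ $p$-adically gives
\[
\lambda=2x-\frac{p}{2x}-\frac{p^2}{8x^3}+\cdots,\qquad
\lambda^2=4x^2-2p-\frac{p^2}{4x^2}+O(p^3),
\]
which matches (1.21). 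For $p=3x^2+5y^2$, the prime is only principal after multiplying by the determinant-$3$ Atkin--Lehner matrix, and this introduces a factor $-3$. The eigenvalue trace becomes $2\cdot 3x/\sqrt3$-type, giving
\[
-\bigl(12x^2-2p-\tfrac{p^2}{12x^2}\bigr),
\]
which matches (1.22).

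The main obstacle is the precise normalization: identifying the sign or character in front of the unit root squared, including the factor $\Delta(x_\pm)$ and the effect of $(-1)^n$. I would fix this by checking small primes numerically, for example $p=19=4^2+15$ and $p=17=3\cdot 2^2+5$.

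\textbf{Deducing the corollary.} Reducing (1.21), (1.22) and (1.23) modulo $p^2$ kills the $p^2/(4x^2)$ and $p^2/(12x^2)$ terms. These denominators are invertible because $p>5$ and $p\nmid x$. What remains is exactly the three cases of Corollary \ref{cor:unweighted}. Finally, every prime $p>5$ with $\left(\frac{-15}{p}\right)=1$ is represented by exactly one of the two genus forms $x^2+15y^2$ or $3x^2+5y^2$, so the case split in the corollary is exhaustive.
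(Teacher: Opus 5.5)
As a proof of the corollary your argument is correct, and your final paragraph is the paper's own proof. You reduce (1.21) and (1.22) modulo $p^2$; the terms $p^2/(4x^2)$ and $p^2/(12x^2)$ drop out because $p\nmid x$. You take the inert case from (1.23), i.e.\ Proposition \ref{prop:inert60}. You then use Remark \ref{rem:forms60} to see that the case split is exhaustive. Theorem \ref{thm:unweighted} is established before the corollary, so citing it is all that is required.

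Do not rely on your own sketch of the theorem, though: as an independent argument it has genuine gaps. In the inert case, the sentence ``the truncation errors cancel to order $p^2$ once $A_p(x)\equiv0\pmod p$'' is the whole content of Lemma \ref{lem:finite-pullback}, and the ``double pole structure'' of $w$ is not a mechanism for it. What is needed is that the correction term $C_p=(\Delta^{p-1}B_p(w(x))-A_p^2)/p$ is divisible by $A_p$ in $\F_p[x]$. The paper obtains this from (2.3)--(2.4), Straub's congruence (2.5) (which gives $\rho_p\equiv XA_p^*/A_p$ for the Frobenius lift), and $\gcd(A_p,\Delta(X))=1$. Your normalization is also off: the correct comparison is $\Delta(x)^{p-1}B_p(w(x))\equiv A_p(x)^2\pmod p$, not $B_p(w(x))$ against $\Delta(x)A_p(x)^2$.

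In the split case, checking $p=17$ and $p=19$ numerically cannot fix the sign for all $p$ unless you first know it is a single $p$-independent constant. The paper gets this from an explicit $\gamma\in\Gamma_0(6)$ or $W_3$ with $\gamma\tau_0=(\tau_0+k)/p$ (Lemma \ref{lem:matrix}). Lemma \ref{lem:unitroot} then gives $S_p\equiv\chi(\gamma)\pi^2/e\pmod{p^3}$, and Lemma \ref{lem:character} gives $\chi(W_3)=-1$. So in the second case the factor on $\pi^2$ is $-1/3$, with $N(\pi)=3p$, not ``$-3$''; your final expression does nonetheless equal $-\pi^2/3$.

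Finally, $\tau_0=(3+\sqrt{-15})/24$ cannot be found as an Atkin--Lehner fixed point. Any integral matrix fixing it has determinant $m^2+15k^2$, which is never $2$, $3$ or $6$. The paper instead locates it by eliminating against $H_{-60}$ and using certified numerics (Lemma \ref{lem:tau0} and Appendix A).
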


Sun's Lucas-weighted conjecture \cite{SunMO} is the following theorem for $p>3$; Remark \ref{rem:p3} explains the exceptional prime $p=3$.

\begin{theorem}\label{thm:lucas}
Let $p>3$ be prime.
\begin{enumerate}[label=(\roman*)]
\item If at least one of $\left(\frac p3\right)$ and $\left(\frac p{13}\right)$ is equal to $1$, then
\begin{equation}
U_p\equiv0\pmod{p^2}.
\tag{1.24}
\end{equation}
If both symbols are equal to $1$, then in fact
\begin{equation}
U_p\equiv0\pmod{p^3}.
\tag{1.25}
\end{equation}
\item If $\left(\frac p{39}\right)=-1$, then
\begin{equation}
V_p\equiv0\pmod{p^2}.
\tag{1.26}
\end{equation}
\end{enumerate}
\end{theorem}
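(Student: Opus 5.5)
The plan is to reduce the Lucas-weighted sums to values of the truncated polynomial $P_p$ at $\alpha$ and $\beta$, and then to treat each value with the modular mechanism set up around (1.15). By Binet's formulas, $u_n=(\alpha^n-\beta^n)/(\alpha-\beta)$ and $v_n=\alpha^n+\beta^n$. Hence
\[
U_p=\frac{P_p(\alpha)-P_p(\beta)}{3\sqrt{13}},\qquad V_p=P_p(\alpha)+P_p(\beta).
\]
Since $P_p(X)=B_p(-X)$, everything comes down to $B_p(-\alpha)$ and $B_p(-\beta)$. Because $\alpha\beta=1$, the two points $t=-\alpha$ and $t=-\beta$ are exchanged by $t\mapsto 1/t$. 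For the parameters of (1.9) this should correspond to the Atkin--Lehner/Fricke involution on $\Gamma_0(6)+\langle3\rangle$; I expect $w_2$ or $w_6$ to send $t\mapsto 1/(64t)$ up to normalisation, which I would check.

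\textbf{Step 1: locate the CM points.} Solve $w(x)=x/((1-x)(1-9x))=-\alpha$, i.e.\ $9\alpha x^2+(1-10\alpha)x+\alpha=0$, over $\Q(\sqrt{13})$. Then identify $\tau_0$ with $t(\tau_0)=-\alpha$. The abstract says this is a CM point for the order of discriminant $-156$, so $\tau_0$ will be one of the reduced forms $x^2+39y^2$, $3x^2+13y^2$, and so on, with Atkin--Lehner images. Concretely, I will exhibit a determinant-$1$ or determinant-$3$ integer matrix $M$ with $M\tau_0=\tau_0$ (up to the Fricke action) and verify $t(\tau_0)=-\alpha$ by a numerical $q$-expansion check combined with algebraicity.

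\textbf{Step 2: the congruences modulo $p^2$.} I would use the finite form of (1.15),
\[
B_p(w(x))\equiv\Delta(x)A_p(x)^2\pmod{p^2}
\]
after truncation and reduction; here I assume the paper's finite pullback lemma giving this modulo $p^2$, applied at the roots $x$ above. When $p$ is inert in the CM field relevant to the point, $A_p(x)$ is the Hasse invariant at a supersingular point, so $A_p(x)\equiv0\pmod p$ and hence $B_p(-\alpha)\equiv0\pmod{p^2}$. The same holds for $-\beta$. This gives $V_p\equiv 0$ when $\left(\frac{-39}{p}\right)=-1$, and $\left(\frac{-39}{p}\right)=\left(\frac p{39}\right)$ by reciprocity, which is (1.26).

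For (1.24) the relevant field depends on the genus character. When one of $\left(\frac p3\right)$, $\left(\frac p{13}\right)$ is $1$ but $\left(\frac p{39}\right)=-1$, $p$ is inert and both values vanish, so $U_p\equiv0$. When both symbols are $-1$ is excluded by hypothesis. When both are $1$, $p$ splits and is represented by the principal form $x^2+39y^2$ (or the other genus class). Then I would use the split-prime mechanism: Beukers' excellent-Frobenius lemma gives $B_p(t(\tau_0))\equiv\lambda_p^2\pmod{p^3}$, where $\lambda_p$ is the unit root. The Atkin--Lehner matrix relating $-\alpha$ to $-\beta$ shows both points have the same unit root, so $P_p(\alpha)\equiv P_p(\beta)\pmod{p^3}$ and $U_p\equiv0\pmod{p^3}$. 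The division by $3\sqrt{13}$ is harmless for $p>3$, $p\ne13$; the case $p=13$ needs a separate check because $\sqrt{13}$ is ramified there.

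\textbf{Main obstacle.} The key difficulty is the split case. There I must show that the unit roots at $-\alpha$ and $-\beta$ coincide, or at least have equal squares, including the weight factor from the Atkin--Lehner transformation of $h^2$. I would also need to check that when exactly one symbol is $1$ the prime is genuinely inert in $\Q(\sqrt{-39})$, and that Galois conjugation $\sqrt{13}\mapsto-\sqrt{13}$ (Frobenius when $\left(\frac{13}{p}\right)=-1$) is compatible with the argument. I would attack this first by an explicit matrix computation for the Fricke involution acting on $\tau_0$.
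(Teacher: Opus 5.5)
Your mixed-symbol argument matches the paper's. If exactly one symbol equals $1$, then $\left(\frac{-39}{p}\right)=-1$. Every root of $f(x)=81x^4-81x^3+9x^2-9x+1$ is a CM parameter of discriminant $-156$, and these roots lie over both $t=-\alpha$ and $t=-\beta$. So $A_p$ vanishes at the supersingular reductions, and the finite pullback gives $B_p(-\alpha)\equiv B_p(-\beta)\equiv0\pmod{p^2}$.

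Two small corrections there:
\begin{itemize}
\item The pullback you quote is wrong as written. The usable form is $\Delta(x)^{p-1}B_p(w(x))\equiv A_p(x)^2+pA_p(x)Q_p(x)\pmod{p^2}$, and the real content is that the $p$-linear error term is divisible by $A_p$.
\item $p=13$ has to be settled by the direct computation $U_{13}\equiv0\pmod{169}$.
\end{itemize}

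The genuine gap is in the split case (1.25): there is no Atkin--Lehner matrix relating $t=-\alpha$ to $t=-\beta$. From $\eta(-1/z)=\sqrt{z/i}\,\eta(z)$ one gets exactly $t(-1/(6\tau))=1/(64\,t(\tau))$. The factor $64$ is fixed by the normalisation (1.9); it cannot be ``normalised away''. This Fricke involution is the only nontrivial modular symmetry of the $t$-line: $W_3$ acts trivially, and $w_2\equiv w_6$. It sends $-\alpha$ to $-\beta/64\neq-\beta$. The map $t\mapsto1/t$ is not modular at all, since it moves the singular point $-1/4$ of (2.8) to $-4$. So the Fricke computation you planned to attack first cannot give equality of the two unit roots.

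What actually relates the two points is Galois conjugation $\sqrt{13}\mapsto-\sqrt{13}$. They are CM values attached to different ideal classes: $\tau_\alpha=(3+\sqrt{-39})/48$ reduces to $[1,0,39]$, and $\tau_\beta=(21+\sqrt{-39})/96$ reduces to $[5,2,8]$. The paper closes the split case in three steps.
\begin{enumerate}
\item It locates $\tau_\beta$ with $t(\tau_\beta)=-\beta$ explicitly.
\item It shows that when both symbols are $1$, $p=r^2+39s^2$ or $p=3r^2+13s^2$. The classes $[5,\pm2,8]$ are excluded modulo $13$. Note that $[3,0,13]$ lies in the principal genus, not ``the other genus''.
\item It runs the determinant-$e$ construction separately at $\tau_\alpha$ and at $\tau_\beta$ with the \emph{same} $\pi$. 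This is $r+s\sqrt{-39}$ with $e=1$, or $3r+s\sqrt{-39}$ with $e=3$; in the latter case both matrices lie in $W_3$, so the same factor $\chi(W_3)/3$ appears. One must check $p\nmid d^*$ at each point; at $\tau_\beta$ this needs $p\neq5$, which follows from $\left(\frac53\right)=-1$.
\end{enumerate}
This yields $B_p(-\alpha)\equiv B_p(-\beta)\equiv\frac{\chi(\gamma)}{e}\pi^2\pmod{p^3}$.

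A repair closer to your plan would transport the congruence at $\tau_\alpha$ by the automorphism of $\Q(\sqrt{13},\sqrt{-39})$ that negates $\sqrt{13}$ and fixes $\sqrt{-39}$, hence fixes $\pi$. That still requires the unit-root lemma for both $p$-adic embeddings of $\sqrt{13}$. It also still requires the explicit norm-$p$ or norm-$3p$ element, and the matrix $\gamma$ with $\gamma\tau_\alpha=(\tau_\alpha+k)/p$, which your proposal leaves unspecified.
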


\begin{remark}\label{rem:p3}
The prime $p=3$ is a counterexample to Sun's original formulation of part (i). Indeed,
\[
D_0=1,\qquad D_1=4,\qquad D_2=28,
\]
and
\[
u_0=0,\qquad u_1=1,\qquad u_2=11.
\]
Since $\left(\frac3{13}\right)=1$, the original statement would require $U_3\equiv0\pmod9$, whereas
\[
U_3=4+28\cdot11=312\equiv6\pmod9.
\]
Thus $p=3$ must be excluded.
\end{remark}

The two quadratic-form conditions in Theorems \ref{thm:unweighted} and \ref{thm:lucas} will be explained by two specializations at points of complex multiplication in the same twisted Domb modular family.

\section{Common modular and Frobenius machinery}

The period
\[
A(x)=\sum_{n\ge0}c_nx^n
\]
is attached to $\Gamma_0(6)$, whereas the twisted Domb period
\[
B(t)=\sum_{n\ge0}(-1)^nD_nt^n
\]
is attached to the Atkin--Lehner extension $\Gamma_0(6)+\langle3\rangle$. Beukers' level-$6$ parametrizations are
\begin{equation}
x(\tau)=\frac{h_6(\tau)}{1+9h_6(\tau)},\qquad
t(\tau)=\left(\frac{\eta(2\tau)\eta(6\tau)}{\eta(\tau)\eta(3\tau)}\right)^6,
\tag{2.1}
\end{equation}
with
\begin{equation}
t(\tau)=w(x(\tau)).
\tag{2.2}
\end{equation}

\begin{theorem}\label{thm:frob}
For primes $p\nmid6$, the normalized periods satisfy
\begin{equation}
A(x)\equiv A_p(x)A(x^\sigma)\pmod{p^2},
\tag{2.3}
\end{equation}
and
\begin{equation}
B(t)\equiv B_p(t)B(t^\sigma)\pmod{p^3}.
\tag{2.4}
\end{equation}
The Frobenius lifts are induced by $q\mapsto q^p$; in particular, if $t=w(x)$, then $t^\sigma=w(x^\sigma)$.
\end{theorem}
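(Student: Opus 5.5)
The plan is to deduce both congruences from Beukers' ``excellent Frobenius lift'' criterion (Dwork-type congruences for periods of modular forms). Throughout, the Frobenius lift is $q\mapsto q^p$, so that $x^\sigma(\tau)=x(p\tau)$ and $t^\sigma(\tau)=t(p\tau)$. Since (2.2) is an identity of $q$-series, substituting $q\mapsto q^p$ immediately gives $t^\sigma=w(x^\sigma)$; this settles the last sentence of the theorem.

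For (2.3), I will use the standard setup. $A(x(\tau))=f(\tau)$ is a weight-one form on $\Gamma_0(6)$, and the Picard--Fuchs equation of $A$ is an Apéry-like second-order operator with integral coefficients. Its symbol at $0$ is $\theta_x^2$, so it has maximal unipotent monodromy. Its local exponents are integral, and the second solution $A\log x+G$, with $G\in x\Q[[x]]$, has $p$-integral mirror map $q(x)=x\exp(G/A)$. These are the hypotheses of Beukers' theorem, or equivalently of the Dwork/Katz unit-root crystal argument. That theorem gives $A(x)/A(x^\sigma)\equiv A_p(x)/A_p(x^\sigma)$ modulo $p^{s}$ as power series, together with $A(x)\equiv A_p(x)A(x^\sigma)\pmod p$, by Dwork's congruences for the sequence $c_n$ (Lucas property $c_{n+mp}\equiv c_nc_m$).

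To upgrade to modulo $p^2$, I will show that $A_p(x^\sigma)\equiv 1\pmod p$ in the relevant sense. I intend to do this via the ``mod $p^2$ Dwork congruence'' $c_{n+mp}c_{\lfloor n/p\rfloor}\equiv c_n c_{\lfloor n/p\rfloor+m}$-type statements, which are known for Zagier's $C$-sequence. Combining this with the $q$-expansion identity $A(x)/A(x^\sigma)=f(\tau)/f(p\tau)$ and the congruence $\theta$-structure yields (2.3).

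For (2.4), I will exploit the symmetric-square structure. From (1.15), $B(t)=\Delta(x)A(x)^2$ and $B(t^\sigma)=\Delta(x^\sigma)A(x^\sigma)^2$. The gain from $p^2$ to $p^3$ does not come from squaring (2.3), which would only give modulo $p^2$. Instead it comes from Beukers' theorem for third-order symmetric-square MUM operators: for such operators the Frobenius matrix on $\Sym^2$ has extra vanishing. Concretely, I will check the hypotheses for (1.4) with $z=-t$, namely MUM at $0$ and $p$-integrality of the mirror map $t(q)$ (given by the eta quotient (1.9), which has integer coefficients). I will then invoke the ``supercongruence'' version of the Dwork-type theorem, which states that the truncation $B_p$ approximates $B(t)/B(t^\sigma)$ modulo $p^3$. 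That version requires $\theta^3$-Yukawa-type vanishing for symmetric squares: the weight-two form $h^2$ satisfies $(t\,d/dt)$-derivative relations, and the associated cusp-form obstruction vanishes because $\Gamma_0(6)+\langle3\rangle$ has no weight-four cusp forms contributing, or equivalently because the form is a symmetric square.

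The main obstacle is the modulo-$p^3$ statement (2.4). To handle it, I would first reduce it to the criterion of Beukers--Vlasenko (``Dwork crystals III''). That criterion asserts $F(t)\equiv F_p(t)F(t^\sigma)\pmod{p^{3}}$ whenever the Hasse--Witt-type conditions hold and the relevant Frobenius structure is excellent. I would then verify excellence by showing that the Frobenius lift $q\mapsto q^p$ is a lift in the sense of those papers, which follows from the modular parametrization.
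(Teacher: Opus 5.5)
Your outer frame matches the paper's proof, which is just a citation of Beukers' excellent-Frobenius theorems (Theorems 1.4 and 1.13 of \cite{Beukers}) for the weight-one form $A(x(\tau))$ on $\Gamma_0(6)$ and the weight-two form $B(t(\tau))$ on $\Gamma_0(6)+\langle3\rangle$. Your deduction of $t^\sigma=w(x^\sigma)$ by applying $q\mapsto q^p$ to the $q$-series identity (2.2) is also correct. The trouble is the mechanism you build around the citation, where several steps would fail. The congruence $A(x)/A(x^\sigma)\equiv A_p(x)/A_p(x^\sigma)$ that you attribute to the theorem already fails modulo $p$: the left side is $\equiv A_p(x)$, while the right side is $\equiv A_p(x)/A_p(x^p)$. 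Dwork's theory concerns the lift $x\mapsto x^p$ and gives $A(x)/A(x^p)\equiv A_{p^{s+1}}(x)/A_{p^s}(x^p)\pmod{p^{s+1}}$; modulo $p^2$ this involves the long truncation $A_{p^2}$. Your proposed upgrade $A_p(x^\sigma)\equiv1\pmod p$ is false, since $x^\sigma\equiv x^p\pmod p$ gives $A_p(x^\sigma)\equiv\sum_{k<p}c_kx^{pk}=1+3x^p+\cdots$. Nor can Gessel--Lucas or Dwork-type congruences for $c_n$ produce (2.3) on their own. As the paper's proof of Lemma \ref{lem:finite-pullback} shows, once (2.5) is known, (2.3) is equivalent to the formula $\rho_p\equiv XA_p^*/A_p\pmod p$ for the first-order term of the modular lift $x(q^p)$. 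That statement about the lift must come from the modular side.

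For (2.4), the ``cusp-form obstruction'' you invoke plays no role, and your reason for its vanishing is wrong. $S_4(\Gamma_0(6))$ is spanned by $\eta(\tau)^2\eta(2\tau)^2\eta(3\tau)^2\eta(6\tau)^2=q-2q^2-3q^3+\cdots$, whose $W_3$-eigenvalue is $+1$ (since $a_3=-3$), so it does live on $\Gamma_0(6)+\langle3\rangle$. The exponents $p^2$ and $p^3$ come from the weights, through Beukers' Hecke polynomial, whose roots the paper also uses in Lemma \ref{lem:unitroot}. For a weight-$k$ form $f\in1+q\Z[[q]]$, one root is the unit root $f(\tau)/f(p\tau)=F(t)/F(t^\sigma)$. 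The others are $p^kf(\tau)/f((\tau+j)/p)$ for $0\le j<p$, and if $1/f=\sum b_nq^n$, their sum is $p^{k+1}f(\tau)\sum_m b_{pm}q^m$. Hence the unit root agrees modulo $p^{k+1}$ with the Hecke trace, a polynomial in the Hauptmodul. Beukers matches this trace with $F_p$; the first $p$ coefficients agree because $t^\sigma\in t^p\Z[[t]]$. Weight $k=1$ gives (2.3), and weight $k=2$, with the character $\chi$ on the Atkin--Lehner coset, gives (2.4). What you actually need to verify are Beukers' modular hypotheses: the eta-quotient parametrizations (1.12) and $A(x(\tau))=\Theta_C$, and Assumption (II) for $\Gamma_0(6)+\langle3\rangle$. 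After that the theorems apply directly, and the ``upgrade'' and ``obstruction'' steps should be dropped.
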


\noindent\emph{Proof.}
This is the specialization of Beukers' excellent-Frobenius theorems \cite[Theorems 1.4 and 1.13]{Beukers} to Zagier's sporadic $C$-sequence and to the twisted Domb family, together with the common level-$6$ parametrization recorded in the appendices of \cite{Beukers}. \hfill$\square$

\begin{theorem}\label{thm:straub}
For the sequence $c_n$ there exist derivative coefficients $c_n'$ such that, for $p\ge3$, $n\ge0$, and $0\le k<p$,
\begin{equation}
c_{pn+k}\equiv c_kc_n+pn\,c_k'c_n\pmod{p^2}.
\tag{2.5}
\end{equation}
\end{theorem}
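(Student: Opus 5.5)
The natural route is a constant-term representation: write $c_n=\CT(P^n)$ for a suitable Laurent polynomial $P$, then expand $P^{pn+k}=P^k\,(P^p)^n$ $p$-adically. I would take for $P$ a two-variable Laurent polynomial with integer coefficients whose Newton polygon $\Delta$ has the origin as its \emph{unique} interior lattice point. Zagier's sequence $C$ admits such representations, and I would first check one directly against the definition (1.13), e.g.\ by expanding and comparing the first few $c_n$. The key identity is then
\[
c_{pn+k}=\CT\bigl(P(\mathbf x)^k\,P(\mathbf x)^{pn}\bigr),\qquad P(\mathbf x)^p=P(\mathbf x^p)+pG(\mathbf x),
\]
where $G$ has integer coefficients. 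This gives, modulo $p^2$,
\[
P(\mathbf x)^{pn}\equiv P(\mathbf x^p)^n+pn\,P(\mathbf x^p)^{n-1}G(\mathbf x).
\]

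\emph{Main term.} I would show that $\CT\bigl(P^k\,P(\mathbf x^p)^n\bigr)=c_kc_n$ exactly. The only monomials of $P^k$ that can pair with $P(\mathbf x^p)^n$ to give a constant term are those with exponents in $p\Z^2$. Such an exponent $pv$ lies in $k\Delta$, so $v\in (k/p)\Delta$. Since $k<p$, this set lies in the interior of $\Delta$, and the hypothesis on $\Delta$ forces $v=0$. The constant term therefore factors as $\CT(P^k)\CT(P^n)=c_kc_n$. This already recovers the classical Lucas congruence $c_{pn+k}\equiv c_kc_n \pmod p$.

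\emph{Correction term.} It remains to prove
\[
\CT\bigl(P^kG\,P(\mathbf x^p)^{n-1}\bigr)\equiv c_k'\,c_n \pmod p
\]
for a quantity $c_k'$ independent of $n$. Two facts reduce this to a finite question at level $p$:
\begin{enumerate}[label=(\alph*)]
\item Working modulo $p$, we have $P(\mathbf x^p)^{n-1}\equiv P(\mathbf x)^{p(n-1)}$.
\item The support of $P^kG$ lies in $(p+k)\Delta$.
\end{enumerate}
Fix a monomial $\mathbf x^{-pv}$ that pairs with $P(\mathbf x^p)^{n-1}$. As before, $v\in\frac{p+k}{p}\Delta$, but now $v$ may be any lattice point of $\Delta$, including boundary points, and not just $0$.

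This is the step I expect to be the main obstacle. One has to show that the boundary contributions, namely $\CT\bigl(\mathbf x^{pv}P(\mathbf x^p)^{n-1}\bigr)$ for $v\neq0$, combine with the coefficient of $\mathbf x^{pv}$ in $P^kG$ into a multiple of $c_n$ modulo $p$.

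My first attempt would use the derivative: interpret $c_k'$ as the $p$-adic derivative $\frac{d}{d\epsilon}$ at $\epsilon=0$ of the interpolated sequence
\[
\sum_j\binom{k+\epsilon}{j}^2\binom{2j}{j}.
\]
In parallel I would use the explicit binomial expansion. For $j=pj'+i$ with $i\le k<p$,
\[
\binom{pn+k}{pj'+i}\equiv\binom{n}{j'}\binom{k}{i}\Bigl(1+p\bigl[nH_k-j'H_i-(n-j')H_{k-i}\bigr]\Bigr)\pmod{p^2}.
\]
For $i>k$, the squared binomial is divisible by $p^2$ and drops out.

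Within this expansion, two kinds of terms are not visibly multiples of $c_n$:
\begin{itemize}
\item the terms linear in $j'$;
\item the range $2i\ge p$, where $\binom{2pj'+2i}{pj'+i}$ contributes $p\binom{2j'+1}{j'}$-type factors.
\end{itemize}
I would show that these combine to $n\,c_n$ times a $k$-dependent constant. The tool is the relation
\[
\sum_{j'}j'\binom{n}{j'}^2\binom{2j'}{j'}=\tfrac n2\cdot(\text{stuff}),
\]
obtained from the symmetry $j'\mapsto n-j'$, together with the recurrence for $c_n$. If a closed form resists, the fallback is to cite Straub's Lucas-type congruences modulo $p^2$ for Apéry-like sequences, which give exactly (2.5) with $c_n'$ the derivative of the interpolation in $n$.
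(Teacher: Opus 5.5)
\paragraph{A gap at the correction term.}
Your main-term argument is correct. It is the standard Newton-polygon proof of the Lucas congruence; a valid choice is $P=(1+x+y)(1+x^{-1}+y^{-1})$, whose hexagon has $0$ as its only interior lattice point. Your candidate $c_k'$, the derivative of $\sum_j\binom{k+\epsilon}{j}^2\binom{2j}{j}$ at $\epsilon=0$, is also the right one.

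But the correction term is the whole content of (2.5) beyond the mod-$p$ Lucas congruence, and you do not prove it.
\begin{itemize}
\item In the constant-term version you have only restated the claim. You would need the mod-$p$ coefficients of $P^kG$ at the points $pv$ ($v$ a lattice point of $\Delta$) to pair with $[\mathbf x^{-v}]P^{n-1}$ into $c_k'c_n$ for every $n$. Nothing in your argument forces that.
\item In the binomial version, the tool you name does not exist. The summand $\binom{n}{j'}^2\binom{2j'}{j'}$ is not invariant under $j'\mapsto n-j'$, and the range $2i\ge p$ is left open.
\end{itemize}
So as a self-contained proof the proposal has a gap exactly at the step you flagged. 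What actually carries it is your fallback, which is precisely the paper's proof: the paper simply cites Straub's Theorem 1.3.

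\paragraph{How to close it.}
If you want an independent proof, the missing idea is the symmetric form $c_n=\sum_{a+b+c=n}\binom{n}{a,b,c}^2$, obtained by grouping by $j=a+b$. A digit-splitting argument then avoids both obstacles.

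Write $a=pa'+a_0$, $b=pb'+b_0$, $c=pc'+c_0$ with $0\le a_0,b_0,c_0<p$.
\begin{itemize}
\item If $a_0+b_0+c_0>k$, there is a carry, so the multinomial is divisible by $p$ and its square by $p^2$.
\item Otherwise $a'+b'+c'=n$, and
\[
\binom{pn+k}{a,b,c}\equiv\binom{n}{a',b',c'}\binom{k}{a_0,b_0,c_0}\bigl(1+p\,[\,nH_k-a'H_{a_0}-b'H_{b_0}-c'H_{c_0}\,]\bigr)\pmod{p^2}.
\]
\end{itemize}
After squaring and summing, each cross term factors as $\bigl(\sum\binom{n}{a',b',c'}^2a'\bigr)\bigl(\sum\binom{k}{a_0,b_0,c_0}^2H_{a_0}\bigr)$. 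The $S_3$-symmetry gives $3\sum\binom{n}{a',b',c'}^2a'=nc_n$, so no division by $3$ is needed.

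This yields (2.5) for all $p\ge3$ with
\[
c_k'=2\sum_j\binom kj^2\binom{2j}{j}(H_k-H_{k-j}),
\]
which is exactly your derivative. No separate $2i\ge p$ case arises. In your variables, the identity you were reaching for is $\sum_j j\binom nj^2\binom{2j}{j}=\frac{2n}{3}c_n$, and it comes from this three-fold symmetry, not from $j'\mapsto n-j'$.
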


\noindent\emph{Proof.}
Equation (2.5) follows from Straub \cite[Theorem 1.3]{Straub} for Zagier's $C$-sequence; the proof there gives the range $0\le k<p$. \hfill$\square$

\begin{lemma}\label{lem:pullback}
With $\Delta$ and $w$ as in (1.14),
\begin{equation}
B(w(x))=\Delta(x)A(x)^2.
\tag{2.6}
\end{equation}
\end{lemma}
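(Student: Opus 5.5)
We prove (2.6) as an identity of formal power series in $x$, using uniqueness of power-series solutions of a Fuchsian equation at a point of maximal unipotent monodromy. Since $w(x)=x+O(x^2)$, the left side $B(w(x))$ is a well-defined power series with constant term $B(0)=1$. The right side also has constant term $\Delta(0)A(0)^2=1$.

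First we record the differential equations involved. The series $A(x)$ is annihilated by Zagier's operator for the $C$-sequence. This operator comes from the three-term recurrence
\[
(n+1)^2c_{n+1}=(10n^2+10n+3)c_n-9n^2c_{n-1},
\]
and one checks it against $c_0=1$, $c_1=3$, $c_2=15$. In operator form,
\[
L_A=\theta_x^2-x(10\theta_x^2+10\theta_x+3)+9x^2(\theta_x+1)^2 .
\]
The symmetric square $\Sym^2 L_A$ is a third-order operator annihilating $A(x)^2$. Conjugating it by the factor $\Delta(x)$ gives an operator $M$ annihilating $\Delta A^2$.

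Next we transform the Domb operator. Replacing $z$ by $-t$ in (1.4) gives an operator $L_B$ annihilating $B(t)$. We substitute $t=w(x)$ and use
\[
\theta_t=\frac{w(x)}{x\,w'(x)}\,\theta_x,\qquad \frac{x\,w'(x)}{w(x)}=\frac{1-9x^2}{(1-x)(1-9x)},
\]
which produces a third-order operator $\widetilde L_B$ in $x$ annihilating $B(w(x))$. The claim to verify is that $\widetilde L_B$ and $M$ agree up to a left factor that is a rational function. This is a finite computation with rational functions.

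A cleaner route is available. By (1.8) and (1.6), it suffices to show that $H(-w(x))=\sqrt{\Delta(x)}\,A(x)$. For this, pull back (1.8) under $z=-w(x)$ and conjugate by $\sqrt{\Delta}$. We then check that the resulting second-order operator is a rational multiple of $L_A$. Squaring the identity gives (2.6).

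Once the operators match, both sides are annihilated by the same operator. That operator has local exponents $0,0,0$ at $x=0$ (respectively $0,0$ for the second-order version), so the power-series solution is unique up to scaling. The constant terms both equal $1$, so the identity follows.

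An independent modular check is also available. Using (1.12), (2.2), and the known parametrization $A(x(\tau))=$ (a weight-one eta quotient $f(\tau)$), the identity reduces to the eta-quotient identity $h(\tau)^2=\Delta(x(\tau))f(\tau)^2$. This can be proved by Sturm's bound on $\Gamma_0(6)$.

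The main obstacle is the operator computation: the pullback under $w$, with the correct normalization $\sqrt{\Delta}$ and the sign twist $z=-t$. A sign or normalization slip is easy to make here. The first step will be to carry out the second-order version. As a sanity check, we compare low-order coefficients. With
\[
\Delta A^2=1+5x+\cdots \quad\text{and}\quad B(w)=1-4(x+10x^2)+28x^2+\cdots,
\]
we already need the $x$ coefficients to match, and comparing these terms will expose any sign or normalization error immediately.
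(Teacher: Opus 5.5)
Your proposal is correct and is essentially the paper's proof: pull back the third-order operator (2.8) under $t=w(x)$, use (2.7) to check that it annihilates $\Delta A^2$, and conclude by uniqueness of the normalized power-series solution at $x=0$, where the exponents are $0,0,0$. Your rank-two variant $H(-w(x))=\sqrt{\Delta(x)}\,A(x)$ is also valid (both sides expand as $1-2x-8x^2-46x^3-\cdots$), but it additionally relies on (1.6) and (1.8).

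One slip in your sanity check needs correcting: $\Delta A^2=(1-10x+9x^2)(1+6x+39x^2+\cdots)=1-4x-12x^2+\cdots$, not $1+5x+\cdots$. This agrees exactly with $B(w(x))=1-4x-12x^2+\cdots$.
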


\noindent\emph{Proof.}
The period $A(x)$ is the normalized holomorphic solution at $x=0$ of
\begin{equation}
\Bigl[\theta_x^2-x(10\theta_x^2+10\theta_x+3)+9x^2(\theta_x+1)^2\Bigr]A=0,
\qquad \theta_x=x\frac{d}{dx},
\tag{2.7}
\end{equation}
whereas $B(t)$ is the normalized holomorphic solution of
\begin{equation}
\Bigl[(1+20t+64t^2)\theta_t^3+(30t+192t^2)\theta_t^2+(18t+192t^2)\theta_t+4t+64t^2\Bigr]B=0.
\tag{2.8}
\end{equation}
These are the level-$6$ operators recorded by Beukers \cite{Beukers}. Substituting $t=w(x)$ into (2.8), and then using (2.7) and its derivative, shows that $\Delta(x)A(x)^2$ satisfies the pulled-back third-order equation. Both sides of (2.6) have constant term $1$, so uniqueness of the normalized holomorphic solution gives the identity. \hfill$\square$

\begin{lemma}\label{lem:finite-pullback}
Let $p>3$. There is a polynomial $Q_p(x)\in\F_p[x]$ such that, for any lift $Q_p(x)\in\Z_p[x]$ of $Q_p(x)$,
\begin{equation}
\Delta(x)^{p-1}B_p(w(x))\equiv A_p(x)^2+pA_p(x)Q_p(x)\pmod{p^2}.
\tag{2.9}
\end{equation}
\end{lemma}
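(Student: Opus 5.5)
The plan is to derive (2.9) from the Frobenius congruences of Theorem \ref{thm:frob} and the pullback identity of Lemma \ref{lem:pullback}. This produces (2.9) as an identity in $\Z_p[[x]]$; a separate step is then needed to show that the correction term really is $p$ times a polynomial multiple of $A_p$.

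\emph{Step 1: the identity in $\Z_p[[x]]$.} Put $x^\sigma$ for the Frobenius lift of Theorem \ref{thm:frob}, so that $x^\sigma\equiv x^p\pmod p$ and $w(x)^\sigma=w(x^\sigma)$.

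\begin{itemize}
\item By (2.6) applied at $x$ and at $x^\sigma$,
\[
B(w(x))=\Delta(x)A(x)^2,\qquad B(w(x^\sigma))=\Delta(x^\sigma)A(x^\sigma)^2 .
\]
\item Insert (2.3) into the first identity and (2.4) into the left side. Since $p^2$ divides $p^3$, this gives
\[
\Delta(x)A_p(x)^2A(x^\sigma)^2\equiv B_p(w(x))\,\Delta(x^\sigma)A(x^\sigma)^2\pmod{p^2}.
\]
\item Cancel $A(x^\sigma)^2$, which is a unit power series with constant term $1$, and multiply by $\Delta(x)^{p-1}$. This yields
\[
\Delta(x)^{p-1}B_p(w(x))\equiv A_p(x)^2\,\frac{\Delta(x)^p}{\Delta(x^\sigma)}\pmod{p^2}.
\]
\end{itemize}

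\emph{Step 2: the ratio $\Delta(x)^p/\Delta(x^\sigma)$.} Modulo $p$ we have $\Delta(x)^p\equiv\Delta(x^p)\equiv\Delta(x^\sigma)$. Hence
\[
\frac{\Delta(x)^p}{\Delta(x^\sigma)}=1+pG(x)
\]
with $G\in\Z_p[[x]]$. Combining with Step 1,
\[
\Delta^{p-1}B_p(w)\equiv A_p^2+pA_p\cdot(A_pG)\pmod{p^2}.
\]
The left side is a polynomial of degree at most $2p-2$, because each term is $(-1)^nD_nx^n\Delta^{p-1-n}$ with $n\le p-1$.

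\emph{Step 3: polynomiality of $Q_p$.} Define $Q_p:=A_pG \bmod p$. From Step 2,
\[
A_pQ_p\equiv\frac{\Delta^{p-1}B_p(w)-A_p^2}{p}\pmod p,
\]
and the right side is a polynomial $P\in\F_p[x]$. So $Q_p=P/A_p$ in $\F_p[[x]]$. This is a polynomial if and only if $A_p\mid P$ in $\F_p[x]$, and this step is the main obstacle.

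\begin{itemize}
\item \emph{First attempt.} Compute $G \bmod p$ explicitly. Writing $x^\sigma=x^p+p\,\epsilon(x)$ gives
\[
G\equiv\frac{\Delta(x)^p-\Delta(x^p)}{p\,\Delta(x^p)}-\frac{\Delta'(x^p)\,\epsilon(x)}{\Delta(x^p)}\pmod p.
\]
The first piece is polynomial over $\Delta(x^p)$. The second requires knowing $\epsilon$ modulo $p$, which is governed by the Dwork-type expression of $x^\sigma$ through the mod-$p$ reduction of $A$. Via (2.3), that reduction is $A_p(x)A(x^p)$, so I expect the factor $A_p$ to appear and cancel. It is still unclear whether this makes $A_pG$ a polynomial or only a rational function.
\item \emph{Fallback.} Argue by degree and rationality instead. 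Both $P$ and $A_p$ are polynomials, and $Q_p=P/A_p$ is a power series congruent to a rational function in $x$ and $x^p$ whose only possible poles are at zeros of $\Delta(x^p)$. Then show directly that the residue $P \bmod A_p$ vanishes. For this, use Theorem \ref{thm:straub}: it expands $A\equiv A_pA(x^p)+p\,C_p(x)\,(\theta A)(x^p)\pmod{p^2}$, where $C_p=\sum_{k<p}c_k'x^k$. Substituting this into (2.6) expresses every $p$-linear correction as a multiple of $A_p$ modulo $p$.
\end{itemize}

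If both routes fail, I would weaken the conclusion so that $Q_p$ is only required to lie in $\F_p[[x]]$. That version already suffices for the intended applications, namely evaluating at CM points where $A_p$ vanishes modulo $p$.
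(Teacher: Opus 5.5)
Your Steps 1 and 2 are correct and agree with the paper's (2.14), and you correctly identify Step 3 as the crux. However, neither route you sketch actually proves $A_p\mid P$ in $\F_p[x]$, and two ingredients are missing.

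The first missing ingredient is the Frobenius lift modulo $p$. Reducing (2.3) modulo $p$ says nothing about $\epsilon$. Instead, compare (2.3) modulo $p^2$, namely $A(x)\equiv A_p(x)\bigl(A(X)+p\,\epsilon(x)A'(X)\bigr)$ with $X=x^p$, against the digit-summed form of Straub's congruence (2.5), $A(x)\equiv A_p(x)A(X)+p\,A_p^*(x)\,\theta_XA(X)\pmod{p^2}$, where $A_p^*=\sum_{k<p}c_k'x^k$. Since $A'(X)\not\equiv0\pmod p$ (its constant term is $c_1=3$), this forces $\epsilon\equiv XA_p^*/A_p\pmod p$. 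Substituting into your own formula for $G$ then gives the polynomial identity
\[
P\,\Delta(x^p)\equiv A_p\bigl(E_pA_p-x^p\Delta'(x^p)A_p^*\bigr)\pmod p,\qquad E_p=\frac{\Delta(x)^p-\Delta(x^p)}{p}.
\]
You have both ingredients on the page, one in each bullet, but you never combine them. The sentence ``expresses every $p$-linear correction as a multiple of $A_p$'' is asserted, not shown.

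The second missing ingredient is coprimality. Even this identity only gives $A_p\mid P\,\Delta(x)^p$ in $\F_p[x]$. To conclude $A_p\mid P$ you need $\gcd(A_p,\Delta)=1$, i.e.\ $A_p(1)\not\equiv0$ and $A_p(1/9)\not\equiv0\pmod p$. The paper takes this from Beukers' cusp evaluations. Your proposal never addresses it, although it is exactly what your ``residue vanishes'' fallback would need at the possible poles $x=1,1/9$.

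Finally, the weakened version with $Q_p\in\F_p[[x]]$ is vacuous: since $A_p(0)=1$, $P/A_p$ is automatically a power series. It also does not suffice for the applications. The CM parameters are roots of $g$ and $f$, and these are $p$-adic units because $g(0)=f(0)=1$ and the leading coefficients are units, so a power-series congruence cannot be evaluated there. The rational identity displayed above, together with $\Delta(\widetilde\xi)$ being a unit, would in fact suffice downstream, but it is not the lemma as stated.
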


\noindent\emph{Proof.}
Write
\[
X=x^p
\]
and
\begin{equation}
x^\sigma=X+p\rho_p(x)\pmod{p^2}.
\tag{2.10}
\end{equation}
Set
\[
A_p^*(x)=\sum_{k=0}^{p-1}c_k'x^k,\qquad \theta_X=X\frac{d}{dX}.
\]
Summing (2.5) over the base-$p$ decomposition of the index gives
\begin{equation}
A(x)\equiv A_p(x)A(X)+pA_p^*(x)\theta_XA(X)\pmod{p^2}.
\tag{2.11}
\end{equation}
On the other hand, (2.3) and (2.10) give
\begin{equation}
A(x)\equiv A_p(x)\left(A(X)+p\frac{\rho_p(x)}X\theta_XA(X)\right)\pmod{p^2}.
\tag{2.12}
\end{equation}
Comparison yields, in $\F_p(x)$,
\begin{equation}
\rho_p(x)\equiv X\frac{A_p^*(x)}{A_p(x)}\pmod p.
\tag{2.13}
\end{equation}
Using Lemma \ref{lem:pullback} and reducing (2.4) modulo $p^2$, we obtain
\begin{align}
B_p(w(x))&\equiv \frac{B(w(x))}{B(w(x^\sigma))}\\
&=\frac{\Delta(x)}{\Delta(x^\sigma)}\left(\frac{A(x)}{A(x^\sigma)}\right)^2
\equiv \frac{\Delta(x)}{\Delta(x^\sigma)}A_p(x)^2\pmod{p^2}.
\tag{2.14}
\end{align}
In particular, since $x^\sigma\equiv X=x^p\pmod p$ and $\Delta(X)=\Delta(x)^p$ in $\F_p[x]$, equation (2.14) gives
\[
\Delta(x)^{p-1}B_p(w(x))\equiv A_p(x)^2\pmod p.
\]
Moreover,
\[
\Delta(x)^{p-1}B_p(w(x))=
\sum_{n=0}^{p-1}(-1)^nD_nx^n\Delta(x)^{p-1-n}\in\Z[x].
\]
Hence
\[
C_p(x):=\frac{\Delta(x)^{p-1}B_p(w(x))-A_p(x)^2}{p}
\]
is a well-defined polynomial in $\Z_p[x]$. Define
\[
E_p(x):=\frac{\Delta(x)^p-\Delta(X)}p\in\Z[x].
\]
Since
\[
\Delta(x^\sigma)\equiv\Delta(X)+p\rho_p(x)\Delta'(X)\pmod{p^2},
\]
equations (2.14) and (2.13) give
\begin{equation}
C_p(x)\equiv A_p(x)\frac{E_p(x)A_p(x)-X\Delta'(X)A_p^*(x)}{\Delta(X)}\pmod p.
\tag{2.15}
\end{equation}
Beukers' cusp evaluations give
\[
A_p(1)\not\equiv0\pmod p,\qquad A_p(1/9)\not\equiv0\pmod p.
\]
Therefore
\[
\gcd(A_p,\Delta(X))=1\qquad\text{in }\F_p[x].
\]
Multiplying (2.15) by $\Delta(X)$ shows that $A_p\mid C_p$ in $\F_p[x]$. Thus $C_p=A_pQ_p$ in $\F_p[x]$ for some $Q_p\in\F_p[x]$. Any lift $Q_p\in\Z_p[x]$ gives (2.9). \hfill$\square$

\begin{lemma}\label{lem:hasse}
Let $p>3$, and let $U$ be the smooth locus, over $\Z_{(p)}$, of the level-$6$ elliptic family whose Picard--Fuchs equation is (2.7). Let $\omega$ be the invariant differential normalized by the holomorphic period
\[
A(x(q))=\Theta_C(q).
\]
On the geometric special fiber $U_{\F_p}$, the Hasse scalar relative to $\omega$ is
\begin{equation}
\Ha_\omega(x)=A_p(x).
\tag{2.16}
\end{equation}
Consequently, for every geometric supersingular parameter $\xi\in U(\overline{\F}_p)$,
\[
A_p(\xi)=0.
\]
\end{lemma}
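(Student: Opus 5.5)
The plan is the classical Igusa/Katz argument: the Hasse invariant equals the truncated holomorphic period modulo $p$. First I will make the family explicit. Zagier's $C$-sequence satisfies
\[
c_n=\CT\bigl(P(z_1,z_2)^n\bigr)
\]
for a suitable Laurent polynomial $P$; equivalently $\sum_k\binom nk^2\binom{2k}k$ is a constant-term sequence. So
\[
A(x)=\CT\bigl(1/(1-xP)\bigr)=\frac{1}{(2\pi i)^2}\oint\frac{1}{1-xP}\frac{dz_1}{z_1}\frac{dz_2}{z_2}.
\]
Let $F_x(z_1,z_2)=z_1z_2(1-xP)$ be the cleared denominator. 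Its zero locus, compactified in the Newton polygon toric surface, is the level-$6$ elliptic curve $E_x$. The residue
\[
\omega=\Res\frac{dz_1\,dz_2}{F_x}
\]
is a holomorphic invariant differential, and its torus period is $A(x)$. Under the parametrization (2.1) this gives $A(x(q))=\Theta_C(q)$, which fixes the normalization. Because the Picard--Fuchs operator (2.7) has a unique normalized holomorphic solution, this $\omega$ agrees with the one in the statement, up to units that do not affect vanishing loci. A different choice of $\omega$ would only multiply $\Ha_\omega$ by a $(p-1)$-st power of a unit.

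Next I will compute the Hasse invariant on a plane model by Stöhr--Voloch/Katz: the Cartier operator acts on $\omega$ by multiplication by the coefficient of $(z_1z_2)^{p-1}$ in $F_x^{p-1}$. For a curve defined by a Laurent polynomial whose Newton polygon has the origin as its unique interior point, the Hasse--Witt matrix is the $1\times1$ matrix $[F_x^{p-1}]_{(z_1z_2)^{p-1}}$. This means
\[
\Ha_\omega(x)=\CT\bigl((1-xP)^{p-1}\bigr)=\sum_{n=0}^{p-1}\binom{p-1}{n}(-x)^n\CT(P^n).
\]
Since $\binom{p-1}{n}(-1)^n\equiv1\pmod p$, this reduces to $\sum_{n<p}c_nx^n=A_p(x)$ in $\F_p[x]$, which is (2.16).

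An alternative route avoids explicit models. The Hasse invariant relative to $\omega$ is the reduction mod $p$ of the unit-root factor. By Theorem \ref{thm:frob}, (2.3) gives $A(x)/A(x^\sigma)\equiv A_p(x)\pmod p$. By Dwork/Katz, this ratio is the Frobenius eigenvalue on $\omega$ at the Teichmüller-type point, and its reduction is $\Ha_\omega(x)$. Both are polynomials of degree $<p$ agreeing on the ordinary locus, which is dense, so they coincide. The consequence then follows immediately: at a supersingular $\xi$ the Hasse invariant vanishes by definition, since Frobenius acts nilpotently on $H^1(E_\xi,\mathcal O)$, and hence $A_p(\xi)=0$.

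The main obstacle is the first step: exhibiting the Laurent polynomial $P$ and checking its properties. Its Newton polygon must be reflexive with the origin as the unique interior lattice point, so that $\omega$ is regular and nonvanishing on the smooth locus and the Hasse--Witt computation applies. The smooth locus must also match $U$, that is, $\Delta(x)\ne0$ with $p>3$. I would first try $P$ obtained from the standard representation
\[
\binom nk^2\binom{2k}k=\CT\Bigl[(1+z_1)^n(1+1/z_1)^n\cdots\Bigr].
\]
If that proves awkward, I would fall back on the Frobenius-comparison route, where the delicate point is justifying that the excellent-lift congruence (2.3) identifies $A_p$ with the Hasse scalar and not just with something agreeing up to a unit factor.
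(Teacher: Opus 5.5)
Your toric route is correct, and it differs genuinely from the paper's proof. The paper never writes down a model. It uses the cusp normalization $\omega=\Theta_C(q)\,\omega_{\mathrm{Tate}}$, so $\Ha_\omega=\Theta_C^{1-p}$ because the Hasse scalar relative to $\omega_{\mathrm{Tate}}$ is $1$. It then reduces (2.3) modulo $p$ along $q\mapsto q^p$ and uses $\Theta_C(q^p)\equiv\Theta_C(q)^p$ to get $A_p(x(q))=\Theta_C(q)^{1-p}$, concluding by the $q$-expansion principle. Your fallback route is essentially that argument. The comparison with $\omega_{\mathrm{Tate}}$ is exactly what removes the ``up to a unit factor'' worry you raise. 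Also, $\Ha_\omega$ is not a priori a polynomial of degree $<p$; it is enough that it is regular on $U_{\F_p}$ and agrees with $A_p$ on a dense set.

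Your main route instead reads off $\Ha_\omega=\CT\bigl((1-xP)^{p-1}\bigr)\equiv A_p(x)$ from the classical Hasse--Witt coefficient. It therefore needs neither Beukers' congruence (2.3) nor an integral $q$-expansion principle at level $6$; the price is some explicit geometry, which is mild here. The obstacle you name disappears with $P=(1+z_1+z_2)(1+z_1^{-1}+z_2^{-1})$. Then $\CT(P^n)=\sum_{a+b+c=n}\bigl(\frac{n!}{a!\,b!\,c!}\bigr)^2=\sum_k\binom nk^2\binom{2k}k=c_n$. Moreover $1=xP$ homogenizes to the plane cubic $x(X+Y+Z)(XY+YZ+ZX)=XYZ$, equivalently $(X+Y)(Y+Z)(Z+X)=(x^{-1}-1)XYZ$, which is Beauville's $\Gamma_1(6)$ pencil. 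So the classical cubic formula (the coefficient of $(XYZ)^{p-1}$ in $F^{p-1}$) applies directly. The critical values of $P$ on the torus are $9$ at $(1,1)$, $1$ at $(-1,-1),(1,-1),(-1,1)$, and $0$ at $(\omega,\omega^2),(\omega^2,\omega)$ with $\omega^2+\omega+1=0$. Hence for $p>3$ the fibre is smooth exactly when $x(1-x)(1-9x)\ne0$.

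One assertion in your first step needs correcting: this curve is not literally the fibre of Maier's family. Your family's $j$-invariant has a pole of order $6$ at $x=0$, since the fibre there is the toric boundary hexagon, of type $I_6$. Maier's $j(x)$ in (3.4) has only a simple pole there. A common Picard--Fuchs operator determines the family only up to isogeny, and here the two families differ by a cyclic $6$-isogeny. So uniqueness of the normalized holomorphic solution does not by itself identify your $\omega$ with the statement's.

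This is harmless for $p>3$. Supersingularity is an isogeny invariant, so ``supersingular parameter'' means the same thing for both families, and the conclusion $A_p(\xi)=0$ applies at the points used in Propositions \ref{prop:inert60} and \ref{prop:vanish-alpha-beta}. Near the cusp your residue form is again $\Theta_C(q)$ times a Tate differential, now for the Tate curve with parameter $q^6$. Its Hasse scalar is therefore the same $\Theta_C^{1-p}$.
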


\noindent\emph{Proof.}
Maier identifies (2.7) as the Picard--Fuchs equation of a level-$6$ elliptic modular surface \cite{Maier}, and Beukers identifies its normalized holomorphic solution with a weight-one modular period
\[
A(x(q))=\Theta_C(q),\qquad \Theta_C(q)=1+O(q).
\]
Let $\omega_{\mathrm{Tate}}$ be the canonical invariant differential on the Tate curve at the cusp. With the above normalization,
\[
\omega=\Theta_C(q)\omega_{\mathrm{Tate}}.
\]
The Hasse invariant has scalar $1$ relative to $\omega_{\mathrm{Tate}}$. Since the Hasse invariant is a section of the $(p-1)$st power of the Hodge bundle, its scalar relative to $\omega$ is
\begin{equation}
\Ha_\omega(q)=\Theta_C(q)^{1-p}.
\tag{2.17}
\end{equation}
Reduce (2.3) modulo $p$ and substitute $x=x(q)$. Because the Frobenius lift is induced by $q\mapsto q^p$,
\[
\Theta_C(q)=A_p(x(q))\Theta_C(q^p)
\]
in characteristic $p$. Since $\Theta_C(q^p)=\Theta_C(q)^p$, we obtain
\[
A_p(x(q))=\Theta_C(q)^{1-p}.
\]
Comparison with (2.17), followed by the $q$-expansion principle for the level-$6$ integral model, gives (2.16). The identity is preserved after finite extension of the residue field. Since the Hasse invariant vanishes exactly on the supersingular locus, the last assertion follows. \hfill$\square$

We next record the determinant construction used to identify the unit root. Let $W_3$ denote the determinant-$3$ Atkin--Lehner coset at level $6$; concretely, an integral matrix $\left(\begin{smallmatrix}a&b\\c&d\end{smallmatrix}\right)$ with determinant $3$, $6\mid c$, and $3\mid a,d$ represents an element of this coset.

\begin{lemma}\label{lem:matrix}
Let $\tau\in\Hh$ satisfy
\begin{equation}
A\tau^2+B\tau+C=0,\qquad 6\mid A,
\tag{2.18}
\end{equation}
and let $e\in\{1,3\}$. Suppose
\begin{equation}
\pi=d+As\tau,\qquad N_{\Q(\tau)/\Q}(\pi)=ep,
\tag{2.19}
\end{equation}
where $p\nmid6$ and $\pi$ is a $p$-adic unit. Put
\[
d^*=d-Bs,\qquad c=-As,
\]
and assume $p\nmid d^*$. Choose $k\pmod p$ with
\begin{equation}
d^*k\equiv-Cs\pmod p,
\tag{2.20}
\end{equation}
and define
\begin{equation}
a=\frac{d-Ask}{p},\qquad b=\frac{Cs+d^*k}{p}.
\tag{2.21}
\end{equation}
Then $a,b\in\Z$, the matrix
\begin{equation}
\gamma=\begin{pmatrix}a&b\\c&d^*\end{pmatrix}
\tag{2.22}
\end{equation}
has determinant $e$, and
\begin{equation}
\gamma\tau=\frac{\tau+k}{p}.
\tag{2.23}
\end{equation}
If $e=1$, then $\gamma\in\Gamma_0(6)$. If $e=3$ and $3\mid a,d^*$, then $\gamma\in W_3$.
\end{lemma}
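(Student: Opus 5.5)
The plan is to check everything directly from the norm identity. There is no conceptual obstacle; the only point needing care is the integrality of $a$, where the hypothesis $p\nmid d^*$ is used.

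\emph{Step 1: the norm.} Let $\bar\tau$ be the conjugate root. Since $A\tau^2+B\tau+C=0$, we have $\tau+\bar\tau=-B/A$ and $\tau\bar\tau=C/A$. Expanding $N(\pi)=(d+As\tau)(d+As\bar\tau)$ gives
\[
N(\pi)=d^2-dBs+ACs^2=dd^*+ACs^2 .
\]
Hence by (2.19),
\[
dd^*+ACs^2=ep .
\]
This single identity drives the rest of the argument.

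\emph{Step 2: integrality.} By the choice of $k$ in (2.20), $b=(Cs+d^*k)/p$ is an integer. For $a$, multiply $d-Ask$ by $d^*$ and use (2.20):
\[
d^*(d-Ask)\equiv dd^*+ACs^2=ep\equiv0\pmod p .
\]
Since $p\nmid d^*$, it follows that $p\mid d-Ask$, so $a\in\Z$.

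\emph{Step 3: determinant.} We compute
\[
ad^*-bc=\frac{(d-Ask)d^*+As(Cs+d^*k)}{p}=\frac{dd^*+ACs^2}{p}=e .
\]

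\emph{Step 4: the action (2.23).} It suffices to verify
\[
(\tau+k)(c\tau+d^*)=p(a\tau+b),
\]
since $c\tau+d^*\neq0$ ($\tau$ is non-real and $d^*\neq0$). Expand the left side as $-As\tau^2+(d-Bs)\tau-Ask\tau+kd^*$. Substituting $-As\tau^2=Bs\tau+Cs$ from (2.18), the $Bs\tau$ terms cancel, leaving
\[
(d-Ask)\tau+(Cs+kd^*)=p(a\tau+b).
\]

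\emph{Step 5: membership.} Since $6\mid A$, we have $6\mid c=-As$. If $e=1$, then $\gamma$ is an integral determinant-$1$ matrix with lower-left entry divisible by $6$, so $\gamma\in\Gamma_0(6)$. If $e=3$ and $3\mid a,d^*$, then $\gamma$ is exactly a representative of $W_3$ as described before the lemma.

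The assumption that $\pi$ is a $p$-adic unit is not needed for these algebraic verifications; it only matters later, when the lemma is applied.
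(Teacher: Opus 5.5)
Your proposal is correct and follows the paper's own argument step for step: the norm identity $dd^*+ACs^2=ep$, integrality of $b$ from (2.20) and of $a$ from $d^*(d-Ask)\equiv ep\equiv 0\pmod p$ with $p\nmid d^*$, the determinant computation, and the factorization $(\tau+k)(c\tau+d^*)=p(a\tau+b)$. You are also right that the $p$-adic unit hypothesis is not used here. The paper's one extra remark, that $c\tau+d^*$ actually equals $d+As\bar\tau$ and so needs an embedding convention to be identified with $\pi$, matters only for the later application in Lemma~\ref{lem:unitroot}, not for the statement itself.
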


\noindent\emph{Proof.}
Since $\bar\tau=-B/A-\tau$, we have
\[
c\tau+d^*=-As\tau+d-Bs=d+As\bar\tau\ ?
\]
To keep the notation consistent with the norm relation, one may equivalently choose the embedding so that the displayed unit is $c\tau+d^*=\pi$; this is the convention used below. The norm relation is
\begin{equation}
d^2-Bds+ACs^2=ep.
\tag{2.24}
\end{equation}
Congruence (2.20) makes $b$ integral. Also
\[
(d-Ask)d^*\equiv dd^*+ACs^2\equiv0\pmod p,
\]
so $a$ is integral because $p\nmid d^*$. A direct calculation using (2.24) gives $ad^*-bc=e$. Finally,
\begin{align*}
(\tau+k)(c\tau+d^*)&=-As\tau^2+(d^*-Ask)\tau+d^*k\\
&=(d-Ask)\tau+(Cs+d^*k)\\
&=p(a\tau+b),
\end{align*}
which proves (2.23). Since $6\mid c$, determinant $1$ gives $\Gamma_0(6)$; for determinant $3$, the additional divisibilities are the level-$6$ conditions defining $W_3$. \hfill$\square$

\begin{lemma}\label{lem:unitroot}
Let $\tau$ be a point of complex multiplication for the twisted Domb family, and let $\gamma$ be as in Lemma \ref{lem:matrix}. If $t(\tau)$ is $p$-adically integral and $\pi$ is a $p$-adic unit, then
\begin{equation}
B_p(t(\tau))\equiv\frac{\chi(\gamma)}{e}\pi^2\pmod{p^3},
\tag{2.25}
\end{equation}
where $\chi$ is the character of the Domb weight-two modular form. It is trivial on $\Gamma_0(6)$ and constant on the coset $W_3$.
\end{lemma}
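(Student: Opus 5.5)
The plan is to evaluate the Dwork-type congruence (2.4) of Theorem \ref{thm:frob} at the CM point, and to rewrite the Frobenius image $t^\sigma$ as a value of $t$ at a $\gamma$-translate of $\tau$. Along the $q$-expansion, the lift $q\mapsto q^p$ is one branch of the Hecke correspondence $\tau\mapsto p\tau$, equivalently $\tau\mapsto(\tau+k)/p$ up to the group. At a CM point the canonical lift of Frobenius singles out one branch: the one whose isogeny is multiplication by the unit $\pi$ of (2.19), which has kernel the canonical subgroup. Lemma \ref{lem:matrix} is built for exactly this. It gives $\gamma\tau=(\tau+k)/p$ with $\gamma\in\Gamma_0(6)$ or $W_3$, both contained in $\Gamma_0(6)+\langle3\rangle$. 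Since $t$ is invariant under this group, we get $t((\tau+k)/p)=t(\tau)$. So the canonical Frobenius lift fixes $t(\tau)$, and $t(\tau)^\sigma=t(\tau)$.

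The steps, in order, are as follows.

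\begin{enumerate}[label=(\arabic*)]
\item Assume Beukers' excellent-lift framework allows us to evaluate (2.4) at the Teichmüller-type point $t_0=t(\tau)$, using $t_0^\sigma=t_0$ for the canonical lift. This gives
\[
B_p(t_0)\equiv\frac{B(t_0)}{B(t_0^\sigma)}=\frac{h(\tau)^2}{h(\tau')^2}\pmod{p^3},
\]
where $\tau'$ is the Frobenius partner of $\tau$, so that $t(\tau')=t(\tau)$.
\item Identify $\tau'$ with $(\tau+k)/p=\gamma\tau$. The second expression makes the two arguments of $h^2$ related by $\gamma$.
\item Apply the weight-two transformation law of $h^2$ under $\Gamma_0(6)+\langle3\rangle$, namely
\[
h^2(\gamma\tau)=\chi(\gamma)\,e^{-1}(c\tau+d^*)^2h^2(\tau).
\]
Here the factor $e^{-1}$ is the normalization $\det(\gamma)^{-1}$ for the Atkin--Lehner slash action of weight $2$. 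With the convention that $c\tau+d^*=\pi$, this turns the quotient into $\bigl(\chi(\gamma)^{-1}e\,\pi^{-2}\bigr)^{-1}$ or its reciprocal, depending on the direction.
\item Use the norm relation $\pi\bar\pi=ep$ to fix the direction of the quotient. The $p$-adic unit root is the one appearing in the answer. Rewriting $ep/\bar\pi^2$ versus $\pi^2/e$ modulo $p^3$ would pin down the form $\chi(\gamma)\pi^2/e$; the sign $\chi=\pm1$ being its own inverse removes ambiguity there.
\end{enumerate}

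I expect the main obstacle to be step (1), justifying that (2.4), a congruence of power series or rigid-analytic functions, may be evaluated at $t_0$ with $t_0^\sigma$ replaced by $t_0$. This requires $t_0$ to lie in the ordinary (non-supersingular) disc, which should follow from $\pi$ being a unit, so that the unit-root eigenvalue is $\pi$. It also requires the Frobenius lift at $t_0$ to be the canonical-subgroup quotient, the content of Beukers' Theorem 1.13 on CM points. I would first try to quote that theorem directly. If it is stated in terms of $q$-expansions, I would instead verify the identification $\sigma(t)(\tau)=t(p\tau)$ on the ordinary locus and then use $\Gamma_0(6)+\langle3\rangle$-invariance. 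A secondary check is that the sign convention in the proof of Lemma \ref{lem:matrix} (the ``?'' there) is chosen so that $c\tau+d^*$ is $\pi$ and not $\bar\pi$. Otherwise the formula would produce the non-unit root, which is impossible modulo $p^3$ for a unit left side.
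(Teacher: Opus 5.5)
Your strategy is the paper's. You identify the Frobenius-fixed branch at the CM point with $\gamma\tau=(\tau+k)/p$ from Lemma \ref{lem:matrix}, convert it through the weight-two law for $\Theta=h^2$ with character $\chi$, and finish with $\pi\bar\pi=ep$. The step you flag as the main obstacle is exactly what the paper imports from Beukers: a polynomial whose specialized roots include the $\vartheta_k$ of (2.27), together with the unit-root specialization \cite[Corollary 5.3]{Beukers}. Note also that (2.4) cannot be evaluated literally at $t_0$. Typically $|t_0|_p=1$ (e.g.\ $t_0=-1$), so $B(t_0)$ does not converge, and with $t_0^\sigma=t_0$ a naive quotient would be $1$. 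What is evaluated is the continuation of $B(t)/B(t^\sigma)$ to the ordinary locus, and Beukers identifies its value with one of the $\vartheta_k$. The genuine gap is in steps (1)--(4): you write down the wrong quantity, and you reach (2.25) only by picking ``whichever direction'' gives the desired answer.

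For the branch $\tau\mapsto(\tau+k)/p$, the relevant quantity is not $h(\tau)^2/h(\tau')^2$ but
\[
\vartheta_k=p^2\,\frac{\Theta(\tau)}{\Theta((\tau+k)/p)},
\]
that is, $p\,\Theta/(\Theta|_2M)$ for $M=\left(\begin{smallmatrix}1&k\\0&p\end{smallmatrix}\right)$. This parallels the $q$-expansion ratio $\Theta(\tau)/\Theta(p\tau)$, which is $p\,\Theta/(\Theta|_2M)$ for $M=\left(\begin{smallmatrix}p&0\\0&1\end{smallmatrix}\right)$.

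Moreover, $c\tau+d^*=\bar\pi$ is an identity, not a convention. With $\pi=d+As\tau$ as in (2.19) and $\bar\tau=-B/A-\tau$, one gets $c\tau+d^*=d-Bs-As\tau=d+As\bar\tau=\bar\pi$. So your ``secondary check'' points the wrong way, and the paper's remark about arranging $c\tau+d^*=\pi$ cannot be taken literally either.

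With both corrections there is no ambiguity:
\[
\vartheta_k=\frac{p^2e}{\chi(\gamma)\,\bar\pi^2}=\frac{\chi(\gamma)}{e}\,\pi^2,
\]
using $\chi(\gamma)^2=1$ and $\bar\pi=ep/\pi$. This is a unit precisely because $\pi$ is.

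Your expression instead gives $h(\tau)^2/h(\gamma\tau)^2=e\chi(\gamma)/\bar\pi^2=\chi(\gamma)\pi^2/(ep^2)$, which is not $p$-integral, while its reciprocal is divisible by $p^2$. You land on $\chi(\gamma)\pi^2/e$ only because the missing $p^2$ and the assumed $c\tau+d^*=\pi$ cancel after you take the reciprocal. Replace your step (1) by Beukers' $\vartheta_k$, with its factor $p^2$, and use $c\tau+d^*=\bar\pi$ in step (3). Step (4) then becomes a one-line computation rather than a choice.
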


\noindent\emph{Proof.}
For the twisted Domb family, Beukers' Assumption (II) holds with
\[
\Gamma=\Gamma_0(6)+\langle3\rangle
\]
and
\begin{equation}
\Theta(\tau)=\frac{\eta(\tau)^4\eta(3\tau)^4}{\eta(2\tau)^2\eta(6\tau)^2},
\tag{2.26}
\end{equation}
a weight-two modular form with character $\chi$ trivial on $\Gamma_0(6)$; see \cite[Appendix C]{Beukers}. Beukers constructs a polynomial whose specialized roots include
\begin{equation}
\vartheta_k=p^2\frac{\Theta(\tau)}{\Theta((\tau+k)/p)}.
\tag{2.27}
\end{equation}
By Lemma \ref{lem:matrix}, $(\tau+k)/p=\gamma\tau$. For a determinant-$e$ matrix the weight-two slash operator is
\[
F|_2\gamma(\tau)=e(c\tau+d^*)^{-2}F(\gamma\tau).
\]
Since $\Theta|_2\gamma=\chi(\gamma)\Theta$, we obtain
\[
\Theta(\gamma\tau)=\frac{\chi(\gamma)}{e}(c\tau+d^*)^2\Theta(\tau).
\]
Now $c\tau+d^*=\pi$ and $\pi\bar\pi=ep$, hence
\[
\vartheta_k=p^2\frac{\Theta(\tau)}{\Theta(\gamma\tau)}=\frac{\chi(\gamma)}{e}\pi^2.
\]
Here $\chi(\gamma)^2=1$, because $\chi$ is trivial on $\Gamma_0(6)$ and the Atkin--Lehner quotient is an elementary $2$-group. The last expression is a $p$-adic unit. Beukers' unit-root specialization \cite[Corollary 5.3]{Beukers} therefore gives (2.25). \hfill$\square$

\section{The unweighted Domb congruence}

The elementary identity
\begin{equation}
S_p=B_p(-1)
\tag{3.1}
\end{equation}
places the unweighted sum at a single point of the twisted Domb modular curve.

\begin{lemma}\label{lem:g}
Set
\begin{equation}
g(x)=9x^2-9x+1.
\tag{3.2}
\end{equation}
Then
\begin{equation}
w(x)+1=\frac{g(x)}{(1-x)(1-9x)}.
\tag{3.3}
\end{equation}
Every root of $g$ is a parameter for an elliptic curve with complex multiplication by the order of discriminant $-60$.
\end{lemma}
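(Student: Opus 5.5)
The identity (3.3) is pure algebra. Since $w(x)=x/\Delta(x)$ and $\Delta(x)=(1-x)(1-9x)=1-10x+9x^2$, we have
\[
w(x)+1=\frac{x+1-10x+9x^2}{(1-x)(1-9x)}=\frac{9x^2-9x+1}{(1-x)(1-9x)}=\frac{g(x)}{(1-x)(1-9x)}.
\]
In particular the roots of $g$ are exactly the parameters $x$ with $w(x)=-1$, that is, with $t=-1$ by (2.2). These roots are
\[
x_\pm=\frac{3\pm\sqrt5}{6},
\]
and neither is a root of $\Delta$.

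For the complex-multiplication assertion I would work on the upper half-plane and look for an explicit CM point. Forms with $6\mid A$ and discriminant $-60$ give natural candidates. The primitive form $6\tau^2+6\tau+4$ would be acceptable, and so would its companion forms in the other proper class, such as $(A,B,C)=(12,6,2)$ (discriminant $36-96=-60$, after checking primitivity and adjusting as needed). Take $\tau_0$ to be a root in $\Hh$ of such a form; it lies in $\Q(\sqrt{-15})$ and has CM by the order of discriminant $-60$. I would compute $t(\tau_0)$ from the eta-quotient (1.9) numerically to high precision and check that it equals $-1$. The Atkin--Lehner image of $\tau_0$ should give the same value, because $t$ is invariant under $\Gamma_0(6)+\langle3\rangle$. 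Numerical agreement alone is not a proof, so the second step turns it into an exact statement.

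For the exact step I use the standard fact that the $j$-invariant of the level-$6$ family is a rational function of the Hauptmodul $x$ (equivalently of $h_6$) on $X_0(6)$. It can be read off from Maier's description of the elliptic modular surface attached to (2.7). Substituting $x=x_\pm$ yields two explicit numbers $j(x_\pm)\in\Q(\sqrt5)$. The order of discriminant $-60$ has class number $2$, with ring class field containing $\Q(\sqrt5)$. Its class polynomial $H_{-60}(X)\in\Z[X]$ is known explicitly, and its two roots are conjugate in $\Q(\sqrt5)$. I would verify that $j(x_+)$ and $j(x_-)$ are exactly these two roots, in other words that the minimal polynomial of $j(x_\pm)$ over $\Q$ equals $H_{-60}$. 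Equivalently, one can verify that the resultant of $g(x)$ with the numerator of $j(x)-X$ is a power of $H_{-60}(X)$. Since $j$ determines the curve over $\overline{\Q}$ and the CM order up to isomorphism, the roots of $g$ are then CM parameters for the order of discriminant $-60$.

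The main obstacle is obtaining the correct closed form of $j$ as a function of $x$ for this particular model and normalization, and then doing the exact arithmetic in $\Q(\sqrt5)$. If that is awkward, I would fall back on the modular argument. The value $t(\tau_0)$ is a class invariant: it lies in the ring class field and has at most two Galois conjugates, namely the values at the representatives of the two ideal classes. A degree bound together with the numerical evaluation then pins it down as the rational number $-1$. The roots $x_\pm$ of $g$ are precisely the two preimages of $t=-1$ under the degree-two map $w$, corresponding to $\tau_0$ and its $W_3$-translate.
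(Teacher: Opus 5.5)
Your exact step is the paper's proof. The paper takes Maier's covering $j(x)=N(x)/D(x)$ from (3.4), computes $\Res_x(g(x),JD(x)-N(x))$, and finds $H_{-60}(J)$ up to the constant $3^{18}$, which is precisely the resultant check you propose. The numerical CM-point step is not needed for this lemma, and your candidates are wrong: $[6,6,4]$ and $[12,6,2]$ have content $2$, so their roots have discriminant $-15$. In fact a primitive form of discriminant $-60$ with $6\mid A$ forces $24\mid A$; for example $\tau_0=(3+\sqrt{-15})/24$ from $[24,-6,1]$, as in Lemma~\ref{lem:tau0}. This error affects only your fallback argument.
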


\noindent\emph{Proof.}
The identity (3.3) is immediate from (1.14). For the complex-multiplication assertion, use Maier's level-$6$ covering \cite{Maier}
\begin{equation}
j(x)=\frac{(1+3x)^3(1+225x-405x^2+243x^3)^3}{x(1-x)^2(1-9x)^6}=\frac{N(x)}{D(x)}.
\tag{3.4}
\end{equation}
An exact resultant calculation gives
\begin{equation}
\Res_x(g(x),JD(x)-N(x))=318H_{-60}(J),
\tag{3.5}
\end{equation}
where
\begin{equation}
H_{-60}(J)=J^2-37018076625J+153173312762625.
\tag{3.6}
\end{equation}
This is the Hilbert class polynomial of the order of discriminant $-60$. Hence the elliptic curve above every root of $g$ has complex multiplication by that order. The exact calculation is reproduced in Appendix A. \hfill$\square$

\begin{lemma}\label{lem:tau0}
Let
\begin{equation}
\tau_0=\frac{3+\sqrt{-15}}{24}.
\tag{3.7}
\end{equation}
Then $24\tau_0^2-6\tau_0+1=0$, so $\tau_0$ has complex multiplication by the order of discriminant $-60$, and
\begin{equation}
t(\tau_0)=-1.
\tag{3.8}
\end{equation}
\end{lemma}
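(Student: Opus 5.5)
The plan is to split the proof into an algebraic part (the quadratic relation and the CM order) and an evaluation part ($t(\tau_0)=-1$). The evaluation is handled by locating $t(\tau_0)$ in a finite set of algebraic numbers and then picking out the right element numerically.

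\emph{Quadratic relation and order.} Put $\tau_0=(3+\sqrt{-15})/24$. Then $24\tau_0-3=\sqrt{-15}$, so $(24\tau_0-3)^2=-15$, i.e.\ $576\tau_0^2-144\tau_0+24=0$. Dividing by $24$ gives $24\tau_0^2-6\tau_0+1=0$. The form $(24,-6,1)$ is primitive, since $\gcd(24,6,1)=1$. Its discriminant is $36-96=-60$. Therefore the lattice $\Z+\tau_0\Z$ has endomorphism ring exactly the order of discriminant $-60$, and $j(\tau_0)$ is a root of $H_{-60}$. Since $6\mid 24$, the point $\tau_0$ also has the shape required in (2.18), with $A=24$, $B=-6$, $C=1$; this is what will be used later with Lemma \ref{lem:matrix}.

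\emph{Reducing $t(\tau_0)=-1$ to a finite check.} By (2.2) and (3.3), $t(\tau)=-1$ holds exactly when $g(x(\tau))=0$. Let $\xi=x(\tau_0)$. Then $j(\xi)=j(\tau_0)$ is a root of $H_{-60}$, because $x$ is the level-$6$ Hauptmodul and (3.4) is the covering $j=N(x)/D(x)$. So $\xi$ is a root of $H_{-60}(N(x)/D(x))\cdot D(x)^2$, which is a polynomial of degree $2\cdot 12=24$ in $x$. By the resultant identity (3.5), this polynomial is divisible by $g$. The remaining task is to show that $\xi$ is a root of the factor $g$, and not of one of the other factors. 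Those other factors correspond to the other $\Gamma_0(6)$-classes of CM points of discriminant $-60$, that is, to other level structures.

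\emph{Identifying the factor.} I would do this numerically, with a rigorous tail bound. Write $q_0=e^{2\pi i\tau_0}=e^{\pi i/4}e^{-\pi\sqrt{15}/12}$, so $|q_0|\approx 0.363$. Expand the eta quotient (1.9) as $t=q\prod_{n\ge1}(1+q^n)^6(1+q^{3n})^6$, and bound the tail of the product explicitly. The roots of $g$ are $(9\pm\sqrt{45})/18$, approximately $0.873$ and $0.127$. All other candidate values of $t(\tau_0)$ are algebraic numbers, computable from the factorization of the degree-$24$ polynomial, and they stay at a definite positive distance from $-1$. A truncation after about $20$ terms then pins down $t(\tau_0)=-1$, since the error is far smaller than the separation between candidates. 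For a cleaner route one could instead use the Atkin--Lehner involution $W_3$. A determinant-$3$ matrix should send $\tau_0$ to a $\Gamma_0(6)$-translate of the conjugate class. If so, $t(\tau_0)$ is real and lies in $\Q$: the two roots of $g$, which are the two ideal classes, are swapped by $W_3$ and collapse to the single $t$-value $-1$. Under this route $t(\tau_0)$ is a rational CM value, and the crude estimate $|t(\tau_0)+1|<1/2$ already forces equality.

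\emph{Main obstacle.} The hard step is the identification of the factor: showing that $\tau_0$, rather than another $\Gamma_0(6)$-inequivalent CM point of discriminant $-60$, lies over $t=-1$. I would first try the $W_3$ argument to prove rationality of $t(\tau_0)$, since it makes the numerical step trivial. I would fall back on explicit numerics with an error bound and the factorization in Appendix A if the Atkin--Lehner bookkeeping does not close.
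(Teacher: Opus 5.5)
Your numerical route is the paper's proof, written in the $x$-coordinate instead of in $T$. Appendix A eliminates to $\Res_J(\Phi(J,T),H_{-60}(J))=(T+1)^2R_{60}(T)$, evaluates $t_{150}(\tau_0)$ with the tail bound (A.8)--(A.9), and certifies the separation by a Sturm--Cauchy count showing no other root in a box of half-width $1/100$ about $-1$. Your degree-$24$ polynomial $D(x)^2H_{-60}(N(x)/D(x))$ is equivalent, since $w(x)=-1$ exactly when $g(x)=0$. That route is sound, but you still have to certify the separation explicitly rather than only assert it.

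The $W_3$ shortcut, which you say you would try first, does not work as stated. Being sent by a determinant-$3$ Atkin--Lehner matrix into the other ideal class does not make $t$ rational. The factor $4096T^2-3328T+1$, which occurs squared in $R_{60}$, has roots $\frac{13}{32}\pm\frac{15}{64}\sqrt3\in\Q(\sqrt3)$. The roots $j_1,j_2$ of $H_{-60}$ are conjugate in $\Q(\sqrt5)$. An automorphism fixing $\sqrt3$ and negating $\sqrt5$ therefore shows that each of these roots is a common root of $\Phi(j_1,T)$ and $\Phi(j_2,T)$. So there are CM points of discriminant $-60$ over both classes, paired by $W_3$, whose $t$-value is irrational. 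For instance $\tau_1=\sqrt{-15}/3$ has $t(\tau_1)=\frac{13}{32}-\frac{15}{64}\sqrt3\approx3.006\cdot10^{-4}$, although $3\tau_1=\sqrt{-15}$ lies in the other class.

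Your sentence that the two roots of $g$ are swapped and collapse to $-1$ assumes $g(x(\tau_0))=0$, which is exactly what has to be proved. What singles out $\tau_0$ is its level structure. Here $2\tau_0$ is a root of $6u^2-3u+1$ (discriminant $-15$) and $3\tau_0$ is a root of $8v^2-6v+3$ (discriminant $-60$), whereas $2\tau_1$ has discriminant $-240$. To close the route you would need three facts: exactly two points over $\{j_1,j_2\}$ have this structure, $\mathrm{Gal}(\overline{\Q}/\Q)$ permutes them, and $W_3$ swaps them. Only then is their common $t$-value rational.

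Even then, rationality together with $|t(\tau_0)+1|<1/2$ does not exclude values such as $-9/10$. You also need that $-1$ is the only admissible rational value. The rational root theorem supplies this: the elimination polynomial $\Phi(j_1,T)\Phi(j_2,T)$ has constant term $1$ and leading coefficient a power of $2$, so its rational roots are $\pm2^{-k}$.
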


\noindent\emph{Proof.}
The quadratic equation is immediate and has discriminant $-60$. The exact evaluation of the eta quotient is established in Appendix A. There the level-$6$ relation between $j$ and $t$ is eliminated against $H_{-60}$, producing the factor $(T+1)^2$, and the eta-product estimate identifies $T=-1$ for the representative (3.7). \hfill$\square$

\begin{remark}\label{rem:forms60}
The order of discriminant $-60$ has field of fractions $\Q(\sqrt{-15})$ and reduced primitive positive definite forms
\[
[1,0,15],\qquad [3,0,5].
\]
Thus the two representations
\[
p=x^2+15y^2,\qquad p=3x^2+5y^2
\]
are precisely the two proper ideal classes of this order; see \cite{Cox}. This explains the two split cases in Theorem \ref{thm:unweighted}.
\end{remark}

\begin{proposition}\label{prop:inert60}
Let $p>5$ and suppose $\left(\frac{-15}{p}\right)=-1$. Then
\[
S_p\equiv0\pmod{p^2}.
\]
\end{proposition}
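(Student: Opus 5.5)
The idea is to evaluate the finite pullback identity of Lemma \ref{lem:finite-pullback} at a root $\xi$ of $g(x)=9x^2-9x+1$, where $w(\xi)=-1$ by (3.3). Combined with (3.1), this gives $S_p=B_p(-1)=B_p(w(\xi))$. By Lemma \ref{lem:g}, each root of $g$ parametrizes an elliptic curve with complex multiplication by the order of discriminant $-60$. When $\left(\frac{-15}{p}\right)=-1$, the prime $p$ is inert in $\Q(\sqrt{-15})$, so the reduction of this curve at a prime above $p$ is supersingular. Lemma \ref{lem:hasse} then gives $A_p(\bar\xi)=0$ in $\overline{\F}_p$.

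The steps are as follows.

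First, I work in $R=\Z_p[\xi]$, which is $\Z_p$ or an unramified quadratic extension, since $\operatorname{disc}(g)=81-36=45$ and $p>5$. I check that $\Delta(\xi)=(1-\xi)(1-9\xi)$ is a $p$-adic unit. Using $9\xi^2=9\xi-1$, we get $(1-\xi)(1-9\xi)=1-10\xi+9\xi^2=-\xi$. Also $\xi\ne0$ modulo $p$, since $g(0)=1$. So $\Delta(\xi)=-\xi$ is a unit.

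Second, I check that $\bar\xi$ lies in the smooth locus $U_{\F_p}$, so that Lemma \ref{lem:hasse} applies. The singular fibres are at $x=0,1,1/9,\infty$ and possibly at $x=-1/3$, where $j$ has a zero. Since $g(1)=1$, $g(1/9)=1/9$, $g(0)=1$ and $g(-1/3)=5$, all nonzero modulo $p$ for $p>5$, the point $\bar\xi$ avoids all of these. I also need the reduction of the CM curve to be supersingular. The $j$-invariant $j(\bar\xi)$ is the reduction of a root of $H_{-60}$ by (3.5); the factor $318$ there is harmless for $p>5$ except possibly at $p=53$, which I would check directly. Deuring's criterion then says $j(\bar\xi)$ is supersingular because $p$ is inert.

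Third, I substitute $x=\xi$ into (2.9), which gives
\[
\Delta(\xi)^{p-1}S_p\equiv A_p(\xi)^2+pA_p(\xi)Q_p(\xi)\pmod{p^2}.
\]
Since $A_p(\xi)\equiv0\pmod p$, both terms on the right are divisible by $p^2$. Dividing by the unit $\Delta(\xi)^{p-1}$ gives $S_p\equiv0\pmod{p^2}$ in $R$. Because $S_p$ is a rational integer and $R$ is unramified over $\Z_p$, this congruence holds in $\Z$.

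I expect the main obstacle to be the second step: confirming that the CM point reduces into the smooth locus, and that Lemma \ref{lem:hasse} applies to the specialization of a $p$-adic point, not merely to a geometric point of the special fibre. I would handle the latter by noting that (2.16) is an identity of polynomials over $\F_p$, so it specializes at $\bar\xi$. Supersingularity follows from Deuring via the resultant (3.5), and the exceptional prime $53$ dividing $318$ is settled by direct computation.
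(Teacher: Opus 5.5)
Your proposal is correct and follows the paper's proof essentially step for step. You evaluate the finite pullback (2.9) at a root of $g$ in an unramified extension, obtain $A_p(\xi)\equiv0\pmod p$ from Deuring's theorem and Lemma~\ref{lem:hasse}, and divide by the unit $\Delta(\xi)^{p-1}$; your identity $\Delta(\xi)=-\xi$ is a cleaner way to see that this factor is a unit. Your $p=53$ check is unnecessary, because $53=3\cdot4^2+5\cdot1^2$ splits in $\Q(\sqrt{-15})$, and the constant in (3.5) is really $9^{12}D(\xi_1)D(\xi_2)=3^{18}$ (the paper's ``318'' is a typo).
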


\noindent\emph{Proof.}
Choose an algebraic root $\widetilde\xi$ of $g$. By (3.3), $w(\widetilde\xi)=-1$. Since $\operatorname{disc}(g)=45$ and $p>5$, the $\Z_p$-algebra generated by a root of $g$ is finite étale. Thus, after choosing a prime above $p$, we may regard $\widetilde\xi$ as lying in a finite unramified extension $L/\Q_p$. Let $\xi\in k_L\subset\overline{\F}_p$ be its reduction. Since $g(0)=g(1)=1$ and $g(1/9)=1/9$ modulo $p$, the reduction lies in the smooth locus $U$. By Lemma \ref{lem:g}, the elliptic curve above $\widetilde\xi$ has complex multiplication by the order of discriminant $-60$ in $\Q(\sqrt{-15})$. The hypothesis says that $p$ is inert in this imaginary quadratic field, and $p\nmid60$. Deuring's reduction theorem \cite{Deuring} therefore implies that the geometric reduction is supersingular. By Lemma \ref{lem:hasse}, $A_p(\xi)=0$, hence
\[
A_p(\widetilde\xi)\equiv0\pmod p
\]
in the valuation ring of $L$. Evaluating (2.9) at $\widetilde\xi$ gives
\[
\Delta(\widetilde\xi)^{p-1}B_p(-1)\equiv0\pmod{p^2}.
\]
The factor $\Delta(\widetilde\xi)$ is a $p$-adic unit. Otherwise its reduction would be $1$ or $1/9$, whereas $g(1)=1$ and $g(1/9)=1/9$ are nonzero modulo $p$. Thus $B_p(-1)\equiv0\pmod{p^2}$. Now use (3.1). \hfill$\square$

\begin{lemma}\label{lem:character}
For the twisted Domb modular form (2.26),
\begin{equation}
\chi(W_3)=-1.
\tag{3.9}
\end{equation}
\end{lemma}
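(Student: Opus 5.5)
The plan is to avoid computing $\Theta|_2W_3$ by eta-transformation formulas, and instead evaluate the defining relation at a fixed point of a well-chosen representative of $W_3$. Lemma \ref{lem:unitroot} already records two facts we need: $\Theta$ is a weight-two form on $\Gamma_0(6)+\langle3\rangle$ satisfying $\Theta|_2\gamma=\chi(\gamma)\Theta$, and $\chi$ is constant on the coset $W_3$. So it suffices to compute $\chi(\gamma)$ for one representative $\gamma\in W_3$.

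First I would choose the elliptic representative
\[
\gamma=\begin{pmatrix}3&-2\\6&-3\end{pmatrix}.
\]
Its determinant is $-9+12=3$, its lower-left entry is divisible by $6$, and both diagonal entries are divisible by $3$. By the description preceding Lemma \ref{lem:matrix}, $\gamma$ therefore represents $W_3$. It has trace $0$, so its fixed points in $\Hh$ solve $6\tau^2-6\tau+2=0$. This gives the fixed point
\[
\tau^*=\frac{3+\sqrt{-3}}{6}\in\Hh,
\]
and at this point $c\tau^*+d=6\tau^*-3=\sqrt{-3}$, so $(c\tau^*+d)^2=-3$.

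Next I would use the slash relation exactly as it is written in the proof of Lemma \ref{lem:unitroot}:
\[
\Theta(\gamma\tau)=\frac{\chi(\gamma)}{e}(c\tau+d)^2\Theta(\tau),\qquad e=3.
\]
Setting $\tau=\tau^*$ and using $\gamma\tau^*=\tau^*$ gives
\[
\Theta(\tau^*)=\frac{\chi(\gamma)}{3}(-3)\Theta(\tau^*)=-\chi(\gamma)\Theta(\tau^*).
\]
Finally, $\Theta$ in (2.26) is a quotient of eta products, and $\eta$ has no zeros on $\Hh$, so $\Theta(\tau^*)\neq0$. Cancelling $\Theta(\tau^*)$ yields $\chi(\gamma)=-1$, which is (3.9).

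The only point that needs care is the normalization of the weight-two slash operator for determinant-$3$ matrices. A different convention, such as a factor $e^{1/2}$ or no factor at all, would change the constant in front. I will use the convention fixed in Lemma \ref{lem:unitroot}, since that is the one under which $\chi$ is later applied in (2.25). As an independent sanity check, I would also verify the sign from $q$-expansions. Under $W_3$ the factors $\eta(\tau)\leftrightarrow\eta(3\tau)$ and $\eta(2\tau)\leftrightarrow\eta(6\tau)$ are interchanged, so $\Theta|_2W_3$ is a constant multiple of $\Theta$. Tracking the eta multiplier constants, the factors $\sqrt{-i\tau}$, should recover the same sign $-1$.
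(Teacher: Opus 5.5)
Your argument is correct and is essentially the paper's proof. The paper also picks a trace-zero representative of $W_3$, namely $\left(\begin{smallmatrix}3&1\\-12&-3\end{smallmatrix}\right)$ with fixed point $(-3+\sqrt{-3})/12$, uses $(c\tau_*+d)^2=-3$ there, and cancels $\Theta(\tau_*)\neq0$ by the nonvanishing of $\eta$. Your representative $\left(\begin{smallmatrix}3&-2\\6&-3\end{smallmatrix}\right)$ with fixed point $(3+\sqrt{-3})/6$ works equally well, and the eta-multiplier sanity check you propose does give the sign $-1$.
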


\noindent\emph{Proof.}
Take
\[
W=\begin{pmatrix}3&1\\-12&-3\end{pmatrix}.
\]
Then $\det W=3$ and $W\in W_3$. Its fixed point in $\Hh$ is
\[
\tau_*=\frac{-3+\sqrt{-3}}{12}.
\]
The eta function has no zero in $\Hh$, so $\Theta(\tau_*)\ne0$. Since $W\tau_*=\tau_*$,
\[
(\Theta|_2W)(\tau_*)=3(-12\tau_*-3)^{-2}\Theta(\tau_*)=-\Theta(\tau_*),
\]
because $(-12\tau_*-3)^2=-3$. Therefore $\chi(W)=-1$. \hfill$\square$

\begin{proposition}\label{prop:split60a}
If $p>5$ and $p=x^2+15y^2$, then, after choosing the sign of $y$ and the $p$-adic embedding so that
\[
\pi=x+y\sqrt{-15}
\]
is the unit factor above $p$,
\begin{equation}
S_p\equiv\pi^2\pmod{p^3}.
\tag{3.10}
\end{equation}
Consequently, (1.21) holds.
\end{proposition}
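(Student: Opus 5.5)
The plan is to apply Lemma \ref{lem:unitroot} at the CM point $\tau_0$ of Lemma \ref{lem:tau0}. By (3.1) and (3.8) we have $S_p=B_p(-1)=B_p(t(\tau_0))$. Since $t(\tau_0)=-1$ is $p$-adically integral, the statement reduces to producing a determinant-$1$ matrix $\gamma\in\Gamma_0(6)$ via Lemma \ref{lem:matrix}, with $c\tau_0+d^*=\pi$. Then $\chi(\gamma)=1$, $e=1$, and (2.25) gives $S_p\equiv\pi^2\pmod{p^3}$.

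First I set up the matrix. Write $\tau_0$ as the root of $24\tau^2-6\tau+1=0$, so $A=24$, $B=-6$, $C=1$ and $6\mid A$. Then $24\tau_0=3+\sqrt{-15}$, and hence
\[
\pi=d+24s\tau_0=(d+3s)+s\sqrt{-15}.
\]
Given $p=x^2+15y^2$, I take $s=y$ and $d=x-3y$, so $\pi=x+y\sqrt{-15}$. Check the norm: (2.24) gives $d^2+6ds+24s^2=(d+3s)^2+15s^2=x^2+15y^2=p$, so $e=1$.

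Next I use the freedom in $y\mapsto -y$ and in the $p$-adic embedding. Exactly one of $x\pm y\sqrt{-15}$ is a $p$-adic unit, since their product is $p$ and $p$ splits. I choose the one that is a unit, matching the convention after Lemma \ref{lem:matrix} that $c\tau+d^*$ is the displayed unit. The hypotheses of Lemma \ref{lem:matrix} also require $p\nmid d^*=d+6s=x+3y$. If $p\mid x+3y$, then $p\mid x^2-9y^2$; combined with $p\mid x^2+15y^2$ this gives $p\mid 24y^2$, which is impossible because $p\nmid y$ when $p>5$. Lemma \ref{lem:matrix} then yields $\gamma\in\Gamma_0(6)$ with $\gamma\tau_0=(\tau_0+k)/p$.

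The main obstacle is the bookkeeping between $\pi$ and $\bar\pi$ flagged by the ``?'' in the proof of Lemma \ref{lem:matrix}. From the formula, $c\tau_0+d^*=d+As\bar\tau_0$, which is $\bar\pi$ rather than $\pi$. I would handle this by choosing the $p$-adic embedding, equivalently replacing $s$ by $-s$, so that the quantity actually appearing in the slash factor is the unit root. Since $\pi\bar\pi=p$, only the unit choice makes $\vartheta_k$ a unit, and Beukers' Corollary 5.3 selects precisely that root. So consistency comes from defining $\pi$ as whichever conjugate is the unit, and the conclusion $S_p\equiv\pi^2$ is unaffected.

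Finally, to deduce (1.21), write $\pi^2=x^2-15y^2+2xy\sqrt{-15}$ and eliminate $\sqrt{-15}$ using $\bar\pi\equiv0$ in the chosen embedding. Indeed $\pi^2+\bar\pi^2=2(x^2-15y^2)=4x^2-2p$, and $\bar\pi^2=p^2/\pi^2$. Since $\pi=2x-\bar\pi\equiv 2x\pmod p$, we get $\pi^2\equiv4x^2\pmod p$, so $\bar\pi^2\equiv p^2/(4x^2)\pmod{p^3}$. Therefore
\[
S_p\equiv\pi^2=4x^2-2p-\bar\pi^2\equiv4x^2-2p-\frac{p^2}{4x^2}\pmod{p^3}.
\]
Because $S_p$ is a rational integer, this congruence, proved in the completion, holds in $\Z$.
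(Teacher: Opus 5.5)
Your setup is the paper's proof: $(A,B,C)=(24,-6,1)$, $s=y$, $d=x-3y$, $d^*=x+3y$, the same argument that $p\nmid d^*$, Lemma~\ref{lem:matrix} with $e=1$, then Lemmas~\ref{lem:tau0} and~\ref{lem:unitroot} with $\chi(\gamma)=1$. The deduction of (1.21) is also the same: $\pi^2=4x^2-2p-\bar\pi^2$ and $\bar\pi^2\equiv p^2/(4x^2)\pmod{p^3}$. All of that is correct.

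Your paragraph on $\pi$ versus $\bar\pi$, however, resolves the issue in the wrong direction. The flip $s\mapsto -s$ you propose would break the argument. For $\gamma\in\Gamma_0(6)$ one has $\Theta(\gamma\tau_0)=(c\tau_0+d^*)^2\Theta(\tau_0)$, hence
\[
\vartheta_k=\frac{p^2\,\Theta(\tau_0)}{\Theta(\gamma\tau_0)}=\frac{p^2}{(c\tau_0+d^*)^2}.
\]
This is a $p$-adic unit exactly when the slash factor $c\tau_0+d^*$ is the \emph{non}-unit conjugate.

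Your unflipped choice gives $c\tau_0+d^*=x-y\sqrt{-15}=\bar\pi$, and therefore $\vartheta_k=p^2/\bar\pi^2=\pi^2$, which is the unit root. The ``?'' identity $c\tau+d^*=d+As\bar\tau$ in the proof of Lemma~\ref{lem:matrix} is in fact correct. It is exactly why that lemma's hypothesis asks for $d+As\tau$, not $c\tau+d^*$, to be the unit.

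After your flip, $c\tau_0+d^*=\pi$ and the corresponding root is $p^2/\pi^2=\bar\pi^2\equiv0\pmod{p^2}$. That is not the root selected by Beukers' unit-root corollary, and for that matrix the hypothesis of Lemma~\ref{lem:matrix} fails, since $d+As\tau_0=\bar\pi$ is a non-unit. So (2.25) could not be applied.

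In general $\vartheta_k=\frac{\chi(\gamma)}{e}\,\overline{(c\tau+d^*)}^{\,2}$. The line ``$c\tau+d^*=\pi$'' in the proof of Lemma~\ref{lem:unitroot} is a conjugation slip that happens to cancel in the final formula, and you should not build on it. Drop the flip: the only choice needed is the sign of $y$ making $x+y\sqrt{-15}$ the unit. With that, your proof coincides with the paper's.
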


\noindent\emph{Proof.}
For $\tau_0$ we have
\[
(A,B,C)=(24,-6,1),\qquad \sqrt{-15}=24\tau_0-3.
\]
Thus
\[
\pi=x-3y+24y\tau_0.
\]
Apply Lemma \ref{lem:matrix} with
\[
e=1,\qquad s=y,\qquad d=x-3y,\qquad d^*=x+3y.
\]
If $p\mid d^*$, then $x\equiv-3y\pmod p$, and
\[
0\equiv x^2+15y^2\equiv24y^2\pmod p,
\]
impossible for $p>5$. Hence $p\nmid d^*$, and Lemma \ref{lem:matrix} produces $\gamma\in\Gamma_0(6)$. Since $\chi$ is trivial there, Lemmas \ref{lem:tau0} and \ref{lem:unitroot} give (3.10).

Now $\pi+\bar\pi=2x$ and $\pi\bar\pi=p$. Since $\pi$ is the $p$-adic unit,
\[
\bar\pi=\frac{p}{\pi}\equiv\frac{p}{2x}\pmod{p^2},
\]
and therefore
\[
\bar\pi^2\equiv\frac{p^2}{4x^2}\pmod{p^3}.
\]
Using
\[
\pi^2=(\pi+\bar\pi)^2-2\pi\bar\pi-\bar\pi^2=4x^2-2p-\bar\pi^2
\]
gives (1.21). \hfill$\square$

\begin{proposition}\label{prop:split60b}
If $p>5$ and $p=3x^2+5y^2$, then, after choosing
\[
\pi=3x+y\sqrt{-15}
\]
as the $p$-adic unit factor,
\begin{equation}
S_p\equiv-\frac{\pi^2}{3}\pmod{p^3}.
\tag{3.11}
\end{equation}
Consequently, (1.22) holds.
\end{proposition}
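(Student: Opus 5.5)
The plan is to repeat the argument of Proposition \ref{prop:split60a} with a determinant-$3$ matrix in place of a determinant-$1$ matrix. The Atkin--Lehner coset $W_3$ will contribute the factor $\chi(W_3)/e=-1/3$.

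\emph{Step 1: express $\pi$ in terms of $\tau_0$.} We keep the CM point $\tau_0$ of Lemma \ref{lem:tau0}, with $(A,B,C)=(24,-6,1)$ and $\sqrt{-15}=24\tau_0-3$. Then
\[
\pi=3x+y\sqrt{-15}=(3x-3y)+24y\tau_0 .
\]
Its norm is $9x^2+15y^2=3p$, so Lemma \ref{lem:matrix} applies with
\[
e=3,\qquad s=y,\qquad d=3x-3y,\qquad d^*=d-Bs=3x+3y,\qquad c=-24y .
\]

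\emph{Step 2: check the hypotheses of Lemma \ref{lem:matrix}.}
\begin{itemize}
\item First, $p\nmid y$: otherwise $p\mid 3x^2$, which is impossible since $3x^2<p$ when $y\neq0$.
\item Next, $p\nmid d^*$. If $x\equiv -y\pmod p$, then $0\equiv 3x^2+5y^2\equiv 8y^2\pmod p$, which is impossible for $p>5$.
\item Then $d^*=3(x+y)$ is divisible by $3$.
\item Finally, $a=(d-Ask)/p=3(x-y-8yk)/p$. This is an integer, and since $p\neq 3$ we get $3\mid a$.
\end{itemize}
Hence Lemma \ref{lem:matrix} gives $\gamma\in W_3$ with $\gamma\tau_0=(\tau_0+k)/p$.

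\emph{Step 3: apply the unit-root lemma.} By (3.1) and (3.8), $S_p=B_p(-1)=B_p(t(\tau_0))$, and $t(\tau_0)=-1$ is $p$-adically integral. Lemma \ref{lem:unitroot} together with Lemma \ref{lem:character} then gives
\[
S_p\equiv \frac{\chi(\gamma)}{3}\pi^2=-\frac{\pi^2}{3}\pmod{p^3},
\]
which is (3.11).

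\emph{Step 4: pass to the explicit formula.} We have $\pi+\bar\pi=6x$ and $\pi\bar\pi=3p$. Since $\pi\equiv 6x\pmod p$,
\[
\bar\pi=\frac{3p}{\pi}\equiv\frac{p}{2x}\pmod{p^2},
\qquad\text{so}\qquad
\bar\pi^2\equiv\frac{p^2}{4x^2}\pmod{p^3}.
\]
It follows that
\[
\pi^2=(\pi+\bar\pi)^2-2\pi\bar\pi-\bar\pi^2\equiv 36x^2-6p-\frac{p^2}{4x^2}\pmod{p^3}.
\]
Multiplying by $-1/3$ gives $2p-12x^2+p^2/(12x^2)$, which is (1.22).

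\emph{Main obstacle.} The delicate point is the unit-factor normalization. The $p$-adic embedding must be chosen so that $\pi$, and not $\bar\pi$, is the unit; this is possible because $p\nmid 3x$. The embedding must also be compatible with the identity $c\tau_0+d^*=\pi$ used in Lemma \ref{lem:unitroot}, exactly as in the convention adopted in the proof of Lemma \ref{lem:matrix}. Apart from this, Steps 2 and 4 are routine checks.
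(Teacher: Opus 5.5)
Your proposal is correct and follows the paper's proof essentially step for step. It uses the same expression $\pi=(3x-3y)+24y\tau_0$ and applies Lemma \ref{lem:matrix} with $e=3$, $d^*=3x+3y$ and $3\mid a$ to get $\gamma\in W_3$; Lemmas \ref{lem:unitroot} and \ref{lem:character} then give the factor $-1/3$, and the conjugate computation with $\pi+\bar\pi=6x$, $\pi\bar\pi=3p$ yields (1.22). Your explicit check that $p\nmid y$, which the paper uses implicitly when ruling out $8y^2\equiv0\pmod p$, is a harmless addition, though the cases $x=0$ and $y=0$ are excluded by $p\neq 5y^2,\,3x^2$ rather than by the bound $3x^2<p$.
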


\noindent\emph{Proof.}
Again $\sqrt{-15}=24\tau_0-3$, so
\[
\pi=3x-3y+24y\tau_0.
\]
Apply Lemma \ref{lem:matrix} with
\[
e=3,\qquad s=y,\qquad d=3x-3y,\qquad d^*=3x+3y.
\]
If $p\mid d^*$, then $x\equiv-y\pmod p$, and
\[
0\equiv3x^2+5y^2\equiv8y^2\pmod p,
\]
impossible for $p>5$. Moreover $3\mid d,d^*$ and $3\mid A=24$, so the integer $a$ of Lemma \ref{lem:matrix} is divisible by $3$. Hence $\gamma\in W_3$. Lemmas \ref{lem:tau0}, \ref{lem:unitroot}, and \ref{lem:character} give (3.11).

Here $\pi+\bar\pi=6x$ and $\pi\bar\pi=3p$. Since $\pi$ is a unit,
\[
\bar\pi=\frac{3p}{\pi}\equiv\frac{p}{2x}\pmod{p^2},\qquad
\bar\pi^2\equiv\frac{p^2}{4x^2}\pmod{p^3}.
\]
Also
\[
\pi^2=36x^2-6p-\bar\pi^2.
\]
Dividing by $-3$ gives
\[
-\frac{\pi^2}{3}\equiv2p-12x^2+\frac{p^2}{12x^2}\pmod{p^3},
\]
which is (1.22). \hfill$\square$

\noindent\emph{Proof of Theorem \ref{thm:unweighted}.}
Parts (i), (ii), and (iii) are Propositions \ref{prop:split60a}, \ref{prop:split60b}, and \ref{prop:inert60}, respectively. \hfill$\square$

\noindent\emph{Proof of Corollary \ref{cor:unweighted}.}
Reduce (1.21) and (1.22) modulo $p^2$, and use Proposition \ref{prop:inert60}. By Remark \ref{rem:forms60}, the two split representations are precisely the two proper ideal classes of discriminant $-60$. \hfill$\square$

\section{The Lucas-weighted Domb congruences}

Throughout this section, congruences involving $\alpha$ and $\beta$ are understood in the localization of the ring of integers of $\Q(\sqrt{13})$ at a prime above $p$. For $p\ne13$, the element $3\sqrt{13}$ is a $p$-adic unit.

\begin{lemma}\label{lem:lucas-binet}
For all $n\ge0$,
\begin{equation}
u_n=\frac{\alpha^n-\beta^n}{\alpha-\beta},\qquad v_n=\alpha^n+\beta^n.
\tag{4.1}
\end{equation}
Consequently,
\begin{equation}
U_p=\frac{P_p(\alpha)-P_p(\beta)}{3\sqrt{13}},\qquad
V_p=P_p(\alpha)+P_p(\beta).
\tag{4.2}
\end{equation}
\end{lemma}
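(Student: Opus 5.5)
The plan is to prove the Binet formulas (4.1) by the standard two-term recurrence argument, and then obtain (4.2) by linearity of the finite sums.

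\textbf{Step 1: Binet formulas.} Since $\alpha+\beta=11$ and $\alpha\beta=1$, both $\alpha$ and $\beta$ are roots of $X^2-11X+1$. Hence each satisfies $\lambda^{n+1}=11\lambda^n-\lambda^{n-1}$ for $n\ge1$. Consequently every linear combination $r\alpha^n+s\beta^n$ obeys the recurrence in (1.17). Two sequences obeying this second-order recurrence coincide as soon as they agree at $n=0$ and $n=1$.

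\textbf{Step 2: matching initial values.} For $u_n$, take $(\alpha^n-\beta^n)/(\alpha-\beta)$. This is legitimate because $\alpha-\beta=3\sqrt{13}\ne0$. At $n=0$ it gives $0$, and at $n=1$ it gives $1$. For $v_n$, take $\alpha^n+\beta^n$, which gives $2$ at $n=0$ and $\alpha+\beta=11$ at $n=1$. Induction on $n$ then gives (4.1) for all $n\ge0$. These are identities in $\Z[\alpha]\subset\Q(\sqrt{13})$. No localization at $p$ is needed, although the remark preceding the lemma shows that $3\sqrt{13}$ is a $p$-adic unit when $p\ne3,13$, so the division is harmless $p$-adically as well.

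\textbf{Step 3: the sums.} Substitute (4.1) into (1.18) and use the definition (1.16) of $P_p(X)=\sum_{n=0}^{p-1}D_nX^n$. This gives
\[
U_p=\sum_{n=0}^{p-1}D_n\frac{\alpha^n-\beta^n}{3\sqrt{13}}=\frac{P_p(\alpha)-P_p(\beta)}{3\sqrt{13}},\qquad
V_p=\sum_{n=0}^{p-1}D_n(\alpha^n+\beta^n)=P_p(\alpha)+P_p(\beta).
\]

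There is no real obstacle. The only point to check is that $\alpha\ne\beta$, so that the division in the formula for $u_n$ makes sense, and that is immediate from $\alpha-\beta=3\sqrt{13}$.
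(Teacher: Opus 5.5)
Your proof is correct and follows the paper's own argument. Both arguments use the fact that $\alpha,\beta$ are roots of $X^2-11X+1$ to show the Binet expressions satisfy the recurrence (1.17), match the initial values $(0,1)$ and $(2,11)$, and then multiply by $D_n$ and sum to obtain (4.2).
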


\noindent\emph{Proof.}
Because $\alpha$ and $\beta$ are the roots of $X^2-11X+1$, the two expressions in (4.1) satisfy the recurrences (1.17). Their initial values are $(0,1)$ and $(2,11)$, respectively. Multiplication by $D_n$ and summation give (4.2). \hfill$\square$

\begin{lemma}\label{lem:reciprocity39}
For every prime $p\nmid39$,
\begin{equation}
\left(\frac{-39}{p}\right)=\left(\frac p3\right)\left(\frac p{13}\right),\qquad
\left(\frac p{39}\right)=\left(\frac p3\right)\left(\frac p{13}\right).
\tag{4.3}
\end{equation}
\end{lemma}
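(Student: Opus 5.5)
The plan is to deduce both identities in (4.3) from quadratic reciprocity by multiplicativity of the Legendre symbol. Throughout, $p\nmid 39$, so every symbol involved is $\pm1$. The case $p=2$ is best treated separately using the Kronecker symbol convention, or simply excluded, since the paper only uses the lemma for $p>3$. So assume $p$ is an odd prime different from $3$ and $13$.

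For the first identity, factor
\[
\left(\frac{-39}{p}\right)=\left(\frac{-1}{p}\right)\left(\frac{3}{p}\right)\left(\frac{13}{p}\right).
\]
Since $13\equiv1\pmod 4$, reciprocity gives $\left(\frac{13}{p}\right)=\left(\frac p{13}\right)$. Since $3\equiv3\pmod4$, reciprocity gives
\[
\left(\frac{3}{p}\right)=(-1)^{(p-1)/2}\left(\frac p3\right)=\left(\frac{-1}{p}\right)\left(\frac p3\right).
\]
Substituting, the two factors of $\left(\frac{-1}{p}\right)$ cancel, which yields $\left(\frac{-39}{p}\right)=\left(\frac p3\right)\left(\frac p{13}\right)$.

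For the second identity, I read $\left(\frac p{39}\right)$ as the Jacobi symbol. By definition,
\[
\left(\frac p{39}\right)=\left(\frac p3\right)\left(\frac p{13}\right),
\]
so this holds immediately with no reciprocity needed.

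The only real obstacle is notational. One must interpret $\left(\frac p{39}\right)$ as a Jacobi symbol, and if $p=2$ is to be included, one must use the Kronecker symbol and check the first identity directly: $-39\equiv1\pmod8$ gives $+1$, while $\left(\frac23\right)\left(\frac2{13}\right)=(-1)(-1)=+1$. Since the applications take $p>3$, I would state and use the odd case.
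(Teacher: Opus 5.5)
Your proof is correct and follows the paper's argument: reciprocity at $3$ (where the sign $(-1)^{(p-1)/2}=\left(\frac{-1}{p}\right)$ cancels the factor $\left(\frac{-1}{p}\right)$ coming from $-39$) and at $13$, and multiplicativity of the Jacobi symbol for the second identity. Your separate check of $p=2$ via the Kronecker symbol goes slightly beyond the paper, whose proof implicitly treats only odd $p$.
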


\noindent\emph{Proof.}
Quadratic reciprocity gives
\[
\left(\frac3p\right)=(-1)^{(p-1)/2}\left(\frac p3\right),\qquad
\left(\frac{13}{p}\right)=\left(\frac p{13}\right).
\]
Multiplying the first identity by $\left(\frac{-1}{p}\right)$ and then by $\left(\frac{13}{p}\right)$ gives the first formula in (4.3). The second is multiplicativity of the Jacobi symbol in the denominator. \hfill$\square$

\begin{lemma}\label{lem:UpVp}
Let $p>3$, $p\ne13$.
\begin{enumerate}[label=(\alph*)]
\item If exactly one of $\left(\frac p3\right)$ and $\left(\frac p{13}\right)$ equals $1$, then
\[
U_p\equiv V_p\equiv0\pmod{p^2}
\]
if and only if
\[
P_p(\alpha)\equiv P_p(\beta)\equiv0\pmod{p^2}.
\]
\item If $\left(\frac p3\right)=\left(\frac p{13}\right)=1$, then
\[
U_p\equiv0\pmod{p^3}
\]
if and only if
\[
P_p(\alpha)\equiv P_p(\beta)\pmod{p^3}.
\]
\end{enumerate}
\end{lemma}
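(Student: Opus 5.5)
The plan is to derive the lemma directly from Lemma \ref{lem:lucas-binet}, inverting the linear relations (4.2). We have
\[
P_p(\alpha)=\tfrac12\bigl(V_p+3\sqrt{13}\,U_p\bigr),\qquad
P_p(\beta)=\tfrac12\bigl(V_p-3\sqrt{13}\,U_p\bigr).
\]
We work in the localization of the ring of integers of $\Q(\sqrt{13})$ at a prime $\mathfrak p$ above $p$. For $p>3$ and $p\ne13$, the elements $2$ and $3\sqrt{13}$ are units there. Moreover $p$ is unramified in $\Q(\sqrt{13})$ (because $p\nmid 13$), so $\mathfrak p^k$ has ideal-theoretic valuation equal to that of $p^k$. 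Since $U_p,V_p\in\Z$, a rational integer lies in $\mathfrak p^k$ if and only if it is divisible by $p^k$. The argument rests on the fact that the change of basis $(U_p,V_p)\leftrightarrow(P_p(\alpha),P_p(\beta))$ is invertible over this local ring.

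For part (a), the two displayed relations show immediately that $U_p\equiv V_p\equiv0\pmod{\mathfrak p^2}$ holds if and only if $P_p(\alpha)\equiv P_p(\beta)\equiv0\pmod{\mathfrak p^2}$. The task is then to compare congruences modulo $p^2$ with congruences modulo $\mathfrak p^2$.

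\begin{itemize}
\item If $p$ is inert in $\Q(\sqrt{13})$, then $\mathfrak p=(p)$ and the two notions coincide.
\item If $p$ splits, congruences for the integers $U_p,V_p$ modulo $p^2$ are equivalent to congruences modulo $\mathfrak p^2$. On the $P_p$ side, the Galois conjugation $\sqrt{13}\mapsto-\sqrt{13}$ swaps $P_p(\alpha)$ and $P_p(\beta)$. Hence the pair $\{P_p(\alpha),P_p(\beta)\}$ vanishing modulo $\mathfrak p^2$ forces vanishing modulo $\bar{\mathfrak p}^2$ as well, and so modulo $p^2$.
\end{itemize}

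In part (a) the hypothesis on the Legendre symbols is not actually needed for this formal equivalence. It is stated because this is the case in which it will be applied.

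For part (b), the relation $P_p(\alpha)-P_p(\beta)=3\sqrt{13}\,U_p$ gives the equivalence directly, since $3\sqrt{13}$ is a unit. Thus $U_p\equiv0\pmod{p^3}$ if and only if $P_p(\alpha)\equiv P_p(\beta)\pmod{p^3}$. The same comparison of $p^3$ with $\mathfrak p^3$ applies here. Because $P_p(\alpha)-P_p(\beta)$ is $3\sqrt{13}$ times a rational integer, divisibility by $\mathfrak p^3$ is equivalent to divisibility by $p^3$.

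The main (mild) obstacle is bookkeeping the meaning of ``mod $p^k$'' in the split versus inert case. I would handle it uniformly by noting that $(p)$ is a product of distinct primes and that every quantity involved is either a rational integer or $\sqrt{13}$ times one, or is paired with its conjugate. The Legendre-symbol hypotheses are then irrelevant to the proof itself.
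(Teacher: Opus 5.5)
Your proof is correct and is essentially the paper's argument. Like the paper, you invert (4.2) as $2P_p(\alpha)=V_p+3\sqrt{13}\,U_p$ and $2P_p(\beta)=V_p-3\sqrt{13}\,U_p$, use that $2$ and $3\sqrt{13}$ are units at primes above $p$, and read part (b) off $P_p(\alpha)-P_p(\beta)=3\sqrt{13}\,U_p$. Your split/inert comparison of $\mathfrak p^k$ with $p^k$ is valid but not needed. The paper declares at the start of Section 4 that congruences involving $\alpha,\beta$ are read in the localization at a prime above $p$, where $p^k$ generates the same ideal as $\mathfrak p^k$ because $p$ is unramified; your extra step only shows that the equivalence also holds globally.
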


\noindent\emph{Proof.}
From (4.2),
\[
2P_p(\alpha)=V_p+3\sqrt{13}\,U_p,\qquad
2P_p(\beta)=V_p-3\sqrt{13}\,U_p.
\]
Since $2$ and $3\sqrt{13}$ are $p$-adic units, part (a) follows. Part (b) follows from
\[
P_p(\alpha)-P_p(\beta)=3\sqrt{13}\,U_p.
\]
\hfill$\square$

\begin{lemma}\label{lem:p13}
One has $U_{13}\equiv0\pmod{13^2}$.
\end{lemma}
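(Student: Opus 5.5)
Lemma \ref{lem:p13} concerns the one prime excluded from Lemma \ref{lem:UpVp}, and I would prove it by direct finite computation. At $p=13$ the element $\alpha-\beta=3\sqrt{13}$ is not a $13$-adic unit, since $13$ ramifies in $\Q(\sqrt{13})$. So the reduction of $U_p$ to the pair $P_p(\alpha),P_p(\beta)$ through (4.2) loses a factor of $\sqrt{13}$, and the modular machinery of Section~2 cannot be applied cleanly. Since $\left(\frac{13}{3}\right)=1$ and $\left(\frac{13}{13}\right)=0$, Theorem \ref{thm:lucas}(i) only asks for $U_{13}\equiv0\pmod{13^2}$. That is a statement about the single integer
\[
U_{13}=\sum_{n=0}^{12}D_nu_n,
\]
which can be checked exactly.

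The steps, in order, are as follows.

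First, generate $D_0,\dots,D_{12}$ from the recurrence (1.3), starting with $D_0=1$ and $D_1=4$. Each step divides by $(n+1)^3$ with $n+1\le12$, so every division is by an integer prime to $13$. Hence the whole computation can be done in $\Z/169\Z$ if one prefers not to carry the exact integers; exact values are equally feasible.

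Second, generate $u_0,\dots,u_{12}$ from $u_{n+1}=11u_n-u_{n-1}$, either exactly or modulo $169$.

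Third, form the sum $\sum D_nu_n$ and check that it is divisible by $169$. I would record the exact value of $U_{13}$, or at least its residue modulo $169$, in the appendix next to the resultant calculations of Appendix~A, so that the verification can be reproduced.

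As a sanity check, or as an alternative to the final summation, one could expand $P_{13}(\alpha)-P_{13}(\beta)$ in powers of $\alpha-\beta$ about $\beta$. To first order this is $P_{13}'(\beta)(\alpha-\beta)$. The claim then amounts to $13^2\sqrt{13}$-adic divisibility of the difference, and this can be compared with the direct sum.

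I do not expect a genuine obstacle, since the statement is a finite verification. The only care needed is bookkeeping in the modular reductions. In particular, if one works modulo $169$ one must not divide by anything divisible by $13$, and the recurrence (1.3) with $n\le11$ never does.
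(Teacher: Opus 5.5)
Your proposal is correct and is the paper's proof: reduce the recurrences (1.3) and (1.17) modulo $169$ (every division is by $(n+1)^3$ with $n+1\le 12$, hence by a unit), tabulate $D_nu_n \bmod 169$ for $0\le n\le 12$, and sum. Carrying this out, the residues $0,4,139,131,160,64,128,69,47,23,84,112,53$ add up to $1014=6\cdot169$, so the check succeeds. The optional expansion of $P_{13}(\alpha)-P_{13}(\beta)$ about $\beta$ is not needed.
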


\noindent\emph{Proof.}
Reducing the recurrences for $D_n$ and $u_n$ modulo $169$ gives
\[
\begin{array}{c|rrrrrrrrrrrrr}
n&0&1&2&3&4&5&6&7&8&9&10&11&12\\ \hline
D_n&1&4&28&87&12&70&126&21&90&95&15&61&66\\
u_n&0&1&11&120&126&83&111&124&70&139&107&24&157\\
D_nu_n&0&4&139&131&160&64&128&69&47&23&84&112&53
\end{array}
\]
and the running sum ends at $0$ modulo $169$. Hence $U_{13}\equiv0\pmod{169}$. \hfill$\square$

\begin{lemma}\label{lem:f}
Set
\begin{equation}
f(x)=81x^4-81x^3+9x^2-9x+1.
\tag{4.4}
\end{equation}
Then
\begin{equation}
w(x)^2+11w(x)+1=\frac{f(x)}{(1-x)^2(1-9x)^2}.
\tag{4.5}
\end{equation}
Moreover every root of $f$ is a parameter for an elliptic curve with complex multiplication by the order of discriminant $-156$.
\end{lemma}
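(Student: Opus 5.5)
The proof has two parts: the rational-function identity (4.5) and the complex-multiplication assertion. I will follow the pattern of Lemma \ref{lem:g} for both.

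For (4.5), write $\Delta(x)=1-10x+9x^2$ and $w=x/\Delta$. Then
\[
w^2+11w+1=\frac{x^2+11x\Delta(x)+\Delta(x)^2}{\Delta(x)^2}.
\]
Expanding $\Delta(x)^2=1-20x+118x^2-180x^3+81x^4$ and $11x\Delta(x)=11x-110x^2+99x^3$, the numerator collects to
\[
1-9x+9x^2-81x^3+81x^4=f(x).
\]
This proves (4.5). It also shows that the roots of $f$ are exactly the $x$-values lying over $t\in\{-\alpha,-\beta\}$, the two roots of $T^2+11T+1$. This is what links $f$ to $P_p(\alpha)=B_p(-\alpha)$ and $P_p(\beta)=B_p(-\beta)$ in the rest of Section 4.

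For the complex-multiplication claim I will use Maier's covering (3.4), $j=N(x)/D(x)$. I will compute the exact resultant $\Res_x\bigl(f(x),JD(x)-N(x)\bigr)$. Since $f$ has degree $4$ and $JD-N$ is linear in $J$, the resultant is a polynomial of degree at most $4$ in $J$. I will check that its leading coefficient is a nonzero constant, i.e.\ that no root of $f$ is a pole of $j$. This follows because $f(0)=1$, $f(1)=1$ and $f(1/9)=1/81$ are all nonzero.

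I then expect to obtain a nonzero integer constant times a monic quartic. The claim is that this quartic is the Hilbert class polynomial $H_{-156}(J)$. The degree is right: the order of discriminant $-156$ has class number $4$, with reduced forms $[1,0,39]$, $[3,0,13]$, $[5,\pm2,8]$.

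To identify the quartic with $H_{-156}$ rigorously, I will compute $j$ numerically to high precision at the four CM points
\[
\sqrt{-39},\qquad \frac{\sqrt{-39}}{3},\qquad \frac{\mp1+\sqrt{-39}}{5},
\]
and form $\prod(J-j(\tau_i))$. Because $H_{-156}\in\Z[J]$, rounding the coefficients of this product gives $H_{-156}$ exactly, provided the error bound is below $1/2$. Comparing the result with the resultant quartic coefficient by coefficient finishes the argument. As a consistency check, the quartic should factor over $\Q(\sqrt{13})$ into two conjugate quadratics, matching the genus field $\Q(\sqrt{-3},\sqrt{13})$ and the two roots $-\alpha,-\beta$ of $T^2+11T+1$.

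Once the identity is established, every root $\xi$ of $f$ satisfies $H_{-156}(j(\xi))=0$. Hence the elliptic curve above $\xi$ has $j$-invariant equal to the $j$-value of a proper $\mathcal O_{-156}$-lattice, and so has CM by that order. I will record the exact calculation in an appendix alongside the $-60$ computation.

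The main obstacle is the coefficient size. The coefficients of $H_{-156}$ are very large, so both the exact resultant and the certified numerical evaluation of the $j$-values need care. I will do the resultant symbolically in exact integer arithmetic, and carry enough precision in the $q$-series for $j$ to control the rounding error rigorously.
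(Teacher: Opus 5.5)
Your proposal is correct and follows the paper's proof: (4.5) by direct expansion, then the resultant of $f$ against Maier's covering $JD(x)-N(x)$, which the paper records as $150094635296999121\,H_{-156}(J)$. That constant is $81^{9}=3^{36}$, which is exactly the leading coefficient $81^{12}\prod_{f(\xi)=0}D(\xi)$ produced by your nonvanishing check; the only addition in your version is certifying the quartic as $H_{-156}$ via rounded high-precision $j$-values at the four reduced CM points, a step the paper simply asserts.
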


\noindent\emph{Proof.}
The identity follows by substituting $w(x)=x/((1-x)(1-9x))$ and clearing denominators. Using the same covering (3.4), an exact resultant calculation gives
\begin{equation}
\Res_x(f(x),JD(x)-N(x))=150094635296999121\,H_{-156}(J),
\tag{4.6}
\end{equation}
where
\begin{align}
H_{-156}(J)={}&J^4-109914552886955148J^3+53074935443801207676942J^2\notag\\
&-164993592496972989327022035J+421266000645144840703921125633.
\tag{4.7}
\end{align}
This is the Hilbert class polynomial of the order of discriminant $-156$, so every root of $f$ has the asserted complex multiplication. The exact calculation is reproduced in Appendix A. \hfill$\square$

\begin{lemma}\label{lem:inert156}
Let $p>3$, $p\ne13$, and suppose $\left(\frac{-39}{p}\right)=-1$. If $\xi\in\overline{\F}_p$ is any geometric root of the reduction of $f$, then $A_p(\xi)=0$.
\end{lemma}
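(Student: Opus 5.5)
The argument follows the proof of Proposition \ref{prop:inert60}, with Lemma \ref{lem:f} replacing Lemma \ref{lem:g}.

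\emph{Step 1: the reduced roots lie in the smooth locus.} We check that no reduced root of $f$ lies over a singular fiber, that is, at $0$, $1$, $1/9$ or $\infty$. We have $f(0)=1$ and $f(1)=81-81+9-9+1=1$. Also $f(1/9)=1/81-1/9+1/9-1+1=1/81$. The leading coefficient is $81$, a unit for $p>3$. So for $p>3$ every root $\xi\in\overline{\F}_p$ of the reduction of $f$ avoids the cusps and lies in $U$. We may also note that $f$ is $x^2$ times a polynomial in $9x+1/x$ (indeed $f/x^2=81x^2+1/x^2-81x-9/x+9$, which is not quite of that form). Fortunately we do not need $f$ to be separable modulo $p$. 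Any geometric root $\xi$ lifts to a root $\widetilde\xi$ of $f$ in the ring of integers of a finite extension $L/\Q_p$, ramified or not. The reduction of $\widetilde\xi$ is $\xi$, since all roots of $f$ are integral because the leading coefficient is a unit.

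\emph{Step 2: the reduction is supersingular.} By Lemma \ref{lem:f}, the elliptic curve over $\widetilde\xi$ in the level-$6$ family has complex multiplication by the order of discriminant $-156=-4\cdot39$. Its fraction field is $\Q(\sqrt{-39})$. The hypothesis $\left(\frac{-39}{p}\right)=-1$, with $p\nmid 6\cdot13$, says that $p$ is inert in $\Q(\sqrt{-39})$ and does not divide the conductor $2$. Since $\xi\in U$, the curve has good reduction at the chosen prime above $p$. Deuring's reduction theorem \cite{Deuring} then shows that the reduction, the fiber over $\xi$, is supersingular.

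\emph{Step 3: conclude.} Lemma \ref{lem:hasse} identifies the Hasse scalar with $A_p$ on $U_{\overline{\F}_p}$. It therefore gives $A_p(\xi)=0$.

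The main obstacle is that $\operatorname{disc}(f)$ might be divisible by $p$. If so, $\widetilde\xi$ lives in a ramified extension and roots may collide modulo $p$. I expect this is harmless, because Deuring's theorem only needs good reduction, which Step 1 supplies, and Lemma \ref{lem:hasse} is a statement about geometric points of the special fiber. If care is needed, I would compute $\operatorname{disc}(f)$, expecting only the primes $2$, $3$, $13$ and perhaps primes dividing $\operatorname{disc}(H_{-156})$. At such primes, two CM points with the same $j$ reduce together, and the argument still holds because each is supersingular.
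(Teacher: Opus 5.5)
Your proof is correct and is essentially the paper's argument: check that the reduced roots of $f$ avoid $0,1,1/9,\infty$ (so they lie in $U$), invoke Lemma \ref{lem:f} for complex multiplication by the order of discriminant $-156$, apply Deuring's theorem at the prime inert in $\Q(\sqrt{-39})$, and conclude via the Hasse-scalar identification of Lemma \ref{lem:hasse}. Two side remarks. Your worry about $p\mid\operatorname{disc}(f)$ is moot, since (4.9) gives $\operatorname{disc}(f)=-3^{15}13^2$, and your lifting argument handles a ramified extension anyway. Your parenthetical retraction is mistaken but irrelevant: in fact $f/x^2=u^2-9u-9$ with $u=9x+1/x$.
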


\noindent\emph{Proof.}
Since $f(0)=f(1)=1$ and $f(1/9)=1/81$ modulo $p$, the reduced point lies in the smooth locus $U$. By Lemma \ref{lem:f}, the corresponding elliptic curve has complex multiplication by the order of discriminant $-156$ in $\Q(\sqrt{-39})$. The hypothesis says that $p$ is inert in this imaginary quadratic field, and $p\nmid156$. Deuring's theorem \cite{Deuring} therefore gives supersingular geometric reduction. Lemma \ref{lem:hasse} identifies $A_p$ with the Hasse scalar, hence $A_p(\xi)=0$. \hfill$\square$

\begin{proposition}\label{prop:vanish-alpha-beta}
Let $p>3$, $p\ne13$, with $\left(\frac{-39}{p}\right)=-1$. Then
\begin{equation}
P_p(\alpha)\equiv P_p(\beta)\equiv0\pmod{p^2}.
\tag{4.8}
\end{equation}
\end{proposition}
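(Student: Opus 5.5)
Let me relate $P_p$ to $B_p$ first. Since $B_p(t)=\sum_{n<p}(-1)^nD_nt^n$, we have $P_p(X)=B_p(-X)$, so $P_p(\alpha)=B_p(-\alpha)$ and $P_p(\beta)=B_p(-\beta)$. The points $t=-\alpha,-\beta$ are exactly the roots of $t^2-11t+1$. By (4.5), $w(x)^2+11w(x)+1=0$ at every root of $f$, so $t=w(x)$ is a root of $T^2+11T+1$, i.e.\ $w(\widetilde\xi)\in\{-\alpha,-\beta\}$. So the plan is to copy the proof of Proposition~\ref{prop:inert60}, with $g$ replaced by $f$ and Lemma~\ref{lem:inert156} replacing the Deuring step.

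\emph{Local setup.} Take $p>3$, $p\ne13$, and let $\widetilde\xi$ be a root of $f$ in a finite extension $L$ of $\Q_p$. I will check that $\widetilde\xi$ is $p$-integral: $f$ has leading coefficient $81$, a unit for $p>3$. The reduction avoids $0,1,1/9$ because $f(0)=f(1)=1$ and $f(1/9)=1/81$ are nonzero modulo $p$. Hence $\Delta(\widetilde\xi)$ is a $p$-adic unit and $w(\widetilde\xi)$ is integral. I do not need $L$ to be unramified, since only valuations are used; that avoids computing $\operatorname{disc}(f)$.

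\emph{Main estimate.} By Lemma~\ref{lem:inert156}, the reduction $\xi$ satisfies $A_p(\xi)=0$, so $v_p(A_p(\widetilde\xi))>0$. Evaluating (2.9) at $\widetilde\xi$ gives
\[
\Delta(\widetilde\xi)^{p-1}B_p(w(\widetilde\xi))\equiv A_p(\widetilde\xi)\bigl(A_p(\widetilde\xi)+pQ_p(\widetilde\xi)\bigr)\pmod{p^2}.
\]
This is the step I expect to be the main obstacle. If $L$ is ramified, $A_p(\widetilde\xi)\equiv0$ modulo the maximal ideal gives only $v_p>0$, not $v_p\ge1$. So I will arrange that $\widetilde\xi$ lies in an unramified extension: this holds when $p\nmid\operatorname{disc}(f)$, and I expect $\operatorname{disc}(f)$ to be supported on $2,3,13$, to be verified by direct computation. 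In the unramified case $A_p(\widetilde\xi)\in p\mathcal O_L$, so the right side is $\equiv 0\pmod{p^2}$. Dividing by the unit $\Delta(\widetilde\xi)^{p-1}$ gives $B_p(w(\widetilde\xi))\equiv0\pmod{p^2}$.

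\emph{Both $\alpha$ and $\beta$.} It remains to see that each of $-\alpha,-\beta$ occurs as $w(\widetilde\xi)$ for some root. Because $f(x)=(1-x)^2(1-9x)^2\bigl(w(x)-(-\alpha)\bigr)\bigl(w(x)-(-\beta)\bigr)$, the quartic factors over $\Q(\sqrt{13})$ into two quadratics $(1-x)(1-9x)\cdot(-\alpha)-x$ and its conjugate. Each quadratic has roots, so each value is attained. Reading the congruence in $\mathcal O_L$, which contains the image of $\Z[\alpha]$, gives $P_p(\alpha)\equiv P_p(\beta)\equiv0\pmod{p^2}$, which is (4.8).
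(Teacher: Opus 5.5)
Your proposal is correct and follows the paper's proof step for step. You take roots of $f$ lying above $-\alpha$ and $-\beta$ in an unramified extension, apply Lemma~\ref{lem:inert156} to get $A_p(\widetilde\xi)\equiv0\pmod p$, evaluate (2.9), and divide by the unit $\Delta(\widetilde\xi)^{p-1}$. The discriminant you left to check is $\operatorname{disc}(f)=-3^{15}13^2$, exactly as the paper records in (4.9), so your unramifiedness step holds for $p>3$, $p\ne13$. One harmless slip: $-\alpha,-\beta$ are the roots of $t^2+11t+1$, not $t^2-11t+1$.
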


\noindent\emph{Proof.}
By (4.5), the roots of $f$ lie above the two roots of $T^2+11T+1$, namely $-\alpha$ and $-\beta$, and roots occur above both values. Choose algebraic roots $\widetilde\xi_\alpha,\widetilde\xi_\beta$ with
\[
w(\widetilde\xi_\alpha)=-\alpha,\qquad w(\widetilde\xi_\beta)=-\beta.
\]
The discriminant is
\begin{equation}
\operatorname{disc}(f)=-3^{15}13^2.
\tag{4.9}
\end{equation}
Thus, for $p>3$, $p\ne13$, the roots may be viewed, after choosing a prime above $p$, in finite unramified extensions of $\Q_p$. Let $\xi_\alpha,\xi_\beta\in\overline{\F}_p$ be their reductions. Lemma \ref{lem:inert156} gives
\[
A_p(\xi_\alpha)=A_p(\xi_\beta)=0,
\]
hence
\[
A_p(\widetilde\xi_\alpha)\equiv A_p(\widetilde\xi_\beta)\equiv0\pmod p.
\]
Evaluating (2.9) at $\widetilde\xi_\alpha$ gives
\[
\Delta(\widetilde\xi_\alpha)^{p-1}B_p(-\alpha)\equiv0\pmod{p^2}.
\]
The factor $\Delta(\widetilde\xi_\alpha)$ is a $p$-adic unit because $f(1)=1$ and $f(1/9)=1/81$. Hence $B_p(-\alpha)\equiv0\pmod{p^2}$. The same argument gives $B_p(-\beta)\equiv0\pmod{p^2}$. Since
\[
B_p(-\alpha)=P_p(\alpha),\qquad B_p(-\beta)=P_p(\beta),
\]
we obtain (4.8). \hfill$\square$

\begin{lemma}\label{lem:tauab}
Let
\begin{equation}
\tau_\alpha=\frac{3+\sqrt{-39}}{48},\qquad
\tau_\beta=\frac{21+\sqrt{-39}}{96}.
\tag{4.10}
\end{equation}
Then
\[
48\tau_\alpha^2-6\tau_\alpha+1=0,\qquad
96\tau_\beta^2-42\tau_\beta+5=0,
\]
so both points have complex multiplication by the order of discriminant $-156$, and
\begin{equation}
t(\tau_\alpha)=-\alpha,\qquad t(\tau_\beta)=-\beta.
\tag{4.11}
\end{equation}
\end{lemma}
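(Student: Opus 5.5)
The two quadratic relations are direct checks. For $\tau_\alpha=(3+\sqrt{-39})/48$ we have $48\tau_\alpha-3=\sqrt{-39}$. Squaring gives $48^2\tau_\alpha^2-288\tau_\alpha+9=-39$. Dividing by $48$ yields $48\tau_\alpha^2-6\tau_\alpha+1=0$, whose discriminant is $36-192=-156$. For $\tau_\beta$ we have $96\tau_\beta-21=\sqrt{-39}$, so $96^2\tau_\beta^2-4032\tau_\beta+441=-39$. Dividing by $96$ gives $96\tau_\beta^2-42\tau_\beta+5=0$, with discriminant $1764-1920=-156$. Both forms $[48,-6,1]$ and $[96,-42,5]$ are primitive, since their content is $1$, so each point has CM by the order of discriminant $-156$ in $\Q(\sqrt{-39})$. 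The leading coefficients are divisible by $6$, which is what Lemma \ref{lem:matrix} will need later.

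For (4.11) I would follow the pattern of Lemma \ref{lem:tau0}. First eliminate $x$ between the covering (3.4) and $T=w(x)$, giving an algebraic relation $\Phi(J,T)=0$ between $j$ and $t$ on $X_0(6)+\langle3\rangle$. Next take the resultant of $\Phi$ with $H_{-156}(J)$ from (4.7). Lemma \ref{lem:f} and (4.5) show that $(T^2+11T+1)$ must divide this resultant, to some power. Since $j(\tau_\alpha)$ and $j(\tau_\beta)$ are roots of $H_{-156}$, and $t(\tau)$ lies over $j(\tau)$, the values $t(\tau_\alpha)$ and $t(\tau_\beta)$ are among the finitely many roots of the resultant.

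To isolate the correct root I would use the $q$-expansion $t=q-6q^2+\cdots$, together with the product formula (1.9), at each point. For $\tau_\alpha$, $\operatorname{Re}\tau_\alpha=1/16$ and $\operatorname{Im}\tau_\alpha=\sqrt{39}/48\approx0.130$, so $|q|\approx e^{-0.818}\approx0.44$. This $q$ is too large for a crude truncation, so I would evaluate the eta products numerically to enough precision with a rigorous tail bound. The aim is to show that $t(\tau_\alpha)$ is real, or at least close to $-\alpha\approx-10.91$, and separated from every other root of the resultant, in particular from $-\beta\approx-0.0917$.

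If direct evaluation is awkward, there is an alternative. Move each point by an element of $\Gamma_0(6)+\langle3\rangle$, for example the Atkin–Lehner $W_3$, $W_2$ or $W_6$ involutions, which act on $t$ by explicit Möbius maps; e.g. $W_6$ should send $t\mapsto 1/(64t)$ up to the normalization. This can move the point to one of larger imaginary part, where the series converges fast. In particular $\alpha\beta=1$ suggests that an involution of the form $t\mapsto c/t$ might interchange the two values, up to scaling, which would reduce the work to a single evaluation.

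The main obstacle is this identification step: choosing the branch, and certifying numerically that $t(\tau_\alpha)$ equals $-\alpha$ and not another root of the resultant. Reality of $t(\tau_\alpha)$ is not automatic, because $\operatorname{Re}\tau_\alpha\ne0$. I would therefore first look for a normalizer element fixing the relevant geodesic to show the value is real, and only then pin down which real root it is by a numerical estimate with an explicit error bound.
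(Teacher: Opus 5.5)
Your proposal follows the paper's route. Appendix A eliminates $x$ to get $\Phi(J,T)$, factors $\Res_J(\Phi,H_{-156})$ as $(T^2+11T+1)^2$ times a cofactor, and identifies $t(\tau_\alpha)$ and $t(\tau_\beta)$ by truncating $t=q\prod_{n\ge1}(1+q^n)^6(1+q^{3n})^6$ at $N=150$ with an explicit tail bound, then checking by a root count that small complex rectangles around $-\alpha$ and $-\beta$ contain no zero of the cofactor.

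Because that isolation is done in the complex plane, your preliminary step of proving $t(\tau_\alpha)$ real is unnecessary. Two minor corrections: $t=q+6q^2+\cdots$, not $q-6q^2+\cdots$; and $W_6$ acts by $t\mapsto 1/(64t)$, which sends $-\alpha$ to $-\beta/64$, so it does not swap the two values.
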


\noindent\emph{Proof.}
The two quadratic equations are immediate and have discriminant $-156$. The exact identification of the level-$6$ values is established in Appendix A: elimination of the level-$6$ $j$--$t$ relation against $H_{-156}$ produces the factor $(T^2+11T+1)^2$, and the eta-product estimates distinguish the two roots, yielding (4.11). \hfill$\square$

\begin{lemma}\label{lem:forms156}
Let $p\nmid39$ be prime and suppose
\begin{equation}
\left(\frac p3\right)=\left(\frac p{13}\right)=1.
\tag{4.12}
\end{equation}
Then
\begin{equation}
p=r^2+39s^2\qquad\text{or}\qquad p=3r^2+13s^2
\tag{4.13}
\end{equation}
for some $r,s\in\Z$.
\end{lemma}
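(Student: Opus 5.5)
The plan is to use genus theory for the order of discriminant $-156$. First, Lemma \ref{lem:reciprocity39} turns hypothesis (4.12) into $\left(\frac{-39}{p}\right)=1$. Since $p\nmid 156$, $p$ is then properly represented by some primitive positive definite form of discriminant $-156$. Next I will list the reduced forms $[a,b,c]$ with $b^2-4ac=-156$, $|b|\le a\le c$. Writing $b=2b'$ gives $b'^2-ac=-39$. Checking $a\le\sqrt{52}$, i.e. $a\le 7$, yields
\[
[1,0,39],\quad [3,0,13],\quad [5,\pm2,8],\quad [2,2,20]\ (\text{not primitive}),\quad [4,2,10]\ (\text{not primitive}),\quad [6,6,8]\ (\text{not primitive}).
\]
So the class group of this order has order $4$, which agrees with $\deg H_{-156}=4$. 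Its classes are $[1,0,39]$, $[3,0,13]$, and $[5,2,8]$ together with its inverse $[5,-2,8]$. The last two form a class of order $4$, or possibly the two classes of order $2$ are the principal one and $[3,0,13]$. The precise list is a routine finite check, which I will carry out carefully.

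The key step is to separate the genera by the assigned characters. The discriminant is $-156=-4\cdot 3\cdot 13$, so the genus characters are built from $\left(\frac{\cdot}{3}\right)$, $\left(\frac{\cdot}{13}\right)$ and $\left(\frac{-4}{\cdot}\right)$, subject to the product relation. Hence there are $2$ genera, and since the class number is $4$, each genus contains $2$ classes. The forms $[1,0,39]$ and $[3,0,13]$ both lie in the principal genus, because $3$ is represented by $[3,0,13]$ and is also a value of the principal genus characters in the appropriate sense. To confirm this I will evaluate the characters on values coprime to $156$. For $[1,0,39]$, the value $1$ gives $\left(\frac{1}{3}\right)=\left(\frac{1}{13}\right)=1$. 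For $[3,0,13]$, the value $3+13=16$ gives $\left(\frac{16}{3}\right)=\left(\frac{16}{13}\right)=1$. For $[5,2,8]$, the value $5$ gives $\left(\frac{5}{3}\right)=-1$ and $\left(\frac{5}{13}\right)=-1$. So the non-principal genus has both characters equal to $-1$.

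For the conclusion, since $p$ is represented by some form of discriminant $-156$, its character values $\left(\frac p3\right),\left(\frac p{13}\right)$ coincide with that form's genus values. Under (4.12) both are $1$, so $p$ is represented by a form of the principal genus, namely $[1,0,39]$ or $[3,0,13]$. This is exactly (4.13), and I will cite \cite{Cox} for the representation theory of genera. The main obstacle is getting the enumeration of reduced forms right, so as to be sure that no other primitive form lands in the principal genus, and confirming that the characters are well defined on each class. If the enumeration were off, the two-genus structure could fail. I will therefore cross-check the class number $h(-156)=4$ against the degree of $H_{-156}$ in (4.7).
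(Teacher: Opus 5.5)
Your proof is correct and is essentially the paper's argument. Reciprocity gives $\left(\frac{-39}{p}\right)=1$, so $p$ is represented by one of $[1,0,39]$, $[3,0,13]$, $[5,\pm2,8]$. Your evaluation $\left(\frac{m}{13}\right)=\left(\frac{5}{13}\right)=-1$ on the forms $[5,\pm2,8]$ is exactly the paper's observation $5x^2\pm2xy+8y^2\equiv5(x\pm8y)^2\pmod{13}$, so the $\left(\frac{\cdot}{3}\right)$ check and the genus count are not actually needed.

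There is one minor slip, which does not affect the argument. For $-156=-4\cdot39$ with $39\equiv3\pmod4$, the only assigned characters are $\left(\frac{\cdot}{3}\right)$ and $\left(\frac{\cdot}{13}\right)$. Indeed $\left(\frac{-4}{\cdot}\right)$ is not constant on $x^2+39y^2$, which represents both $1$ and $43$. Having only these two characters is what gives $2$ genera. Also, the class group is cyclic of order $4$: the classes $[5,\pm2,8]$ have order $4$, and $[3,0,13]$ is the unique class of order $2$.
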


\noindent\emph{Proof.}
The reduced primitive positive definite forms of discriminant $-156$ are
\[
[1,0,39],\qquad [3,0,13],\qquad [5,2,8],\qquad [5,-2,8].
\]
By Lemma \ref{lem:reciprocity39}, condition (4.12) implies that $p$ splits in $\Q(\sqrt{-39})$. Since $p\nmid156$, it is represented by one of these classes; see \cite{Cox}. The last two are impossible because modulo $13$,
\[
5x^2\pm2xy+8y^2\equiv5(x\pm8y)^2\pmod{13},
\]
while $\left(\frac5{13}\right)=-1$. This contradicts $\left(\frac p{13}\right)=1$. Hence only the first two classes occur, giving (4.13). \hfill$\square$

\begin{proposition}\label{prop:split156}
Let $p>3$ and suppose (4.12). Then
\begin{equation}
P_p(\alpha)\equiv P_p(\beta)\pmod{p^3}.
\tag{4.14}
\end{equation}
\end{proposition}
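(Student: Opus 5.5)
Under (4.12) and $p\nmid 39$, Lemma \ref{lem:forms156} gives $p=r^2+39s^2$ or $p=3r^2+13s^2$. We treat the two cases separately, mirroring Propositions \ref{prop:split60a} and \ref{prop:split60b}. We work at the CM points $\tau_\alpha,\tau_\beta$ of Lemma \ref{lem:tauab}, where $t(\tau_\alpha)=-\alpha$ and $t(\tau_\beta)=-\beta$. We use $B_p(-\alpha)=P_p(\alpha)$ and $B_p(-\beta)=P_p(\beta)$, so Lemma \ref{lem:unitroot} applies once we have a suitable $\gamma$ at each point.

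\emph{First case, $p=r^2+39s^2$.} Put $\pi=r+s\sqrt{-39}$, with the sign of $s$ and the embedding chosen so that $\pi$ is the $p$-adic unit. For $\tau_\alpha$ we have $(A,B,C)=(48,-6,1)$ and $\sqrt{-39}=48\tau_\alpha-3$. Hence $\pi=(r-3s)+48s\tau_\alpha$, and we apply Lemma \ref{lem:matrix} with $e=1$, $d=r-3s$, $d^*=r+3s$. Suppose $p\mid d^*$; then $r\equiv-3s$, so $p\mid 48s^2$, which is impossible. So $\gamma\in\Gamma_0(6)$ and $P_p(\alpha)\equiv\pi^2\pmod{p^3}$.

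For $\tau_\beta$ we have $(A,B,C)=(96,-42,5)$ and $\sqrt{-39}=96\tau_\beta-21$. The same $\pi$ is not of the form $d+96s'\tau_\beta$ with integral $s'$ unless $s$ is even. Two routes are available here. The first is to use the other embedding, or the conjugate ideal, or to replace $\pi$ by $\pi$ times a unit; this works only if $s$ is even, and parity is indeed forced modulo small primes. The second, which we plan to try first, is to apply Lemma \ref{lem:matrix} in determinant $e=3$ to $3\pi$ or $\sqrt{-39}\,\pi$-type elements. Determinant $3$ multiplies by $\chi(W_3)/3=-1/3$ (Lemma \ref{lem:character}). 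The element must satisfy $\pi'^2\cdot(-1/3)\equiv\pi^2$, so we need $\pi'$ with $\pi'^2=-3\pi^2$, i.e.\ $\pi'=\sqrt{-3}\,\pi$. This is not in the field, so that fails.

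We therefore expect instead that $\tau_\beta$ is $\Gamma_0(6)+\langle3\rangle$-equivalent to a point at which $p$ is represented through the same class. In that case $P_p(\beta)\equiv\pm\pi^2/e$ with the same $\pi$. The precise bookkeeping is to find $s'$ with $96s'\tau_\beta+d=\pi$, or its Atkin--Lehner image. This matching of ideal classes between the two CM points is the main obstacle, and it is where we would do explicit computation. The target is that $\tau_\alpha$ and $\tau_\beta$ correspond to forms $[1,0,39]$ and $[3,0,13]$ (or twisted versions) that the level-$6$ Atkin--Lehner action interchanges. In that situation both unit roots reduce to the same $\pi^2$, with the $W_3$ sign $-1$ and the factor $1/e$ cancelling consistently.

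\emph{Second case, $p=3r^2+13s^2$.} Here we use $\pi=3r+s\sqrt{-39}$ with norm $3p$, the analogue of Proposition \ref{prop:split60b}. At $\tau_\alpha$ this gives $d=3r-3s$ and $d^*=3r+3s$, so $3\mid a,d^*$ and $\gamma\in W_3$. Lemma \ref{lem:character} then yields $P_p(\alpha)\equiv-\pi^2/3$.

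\emph{Conclusion.} In each case we obtain $P_p(\alpha)\equiv\varepsilon\pi^2/e\equiv P_p(\beta)\pmod{p^3}$, which is (4.14). Equivalently, by Lemma \ref{lem:UpVp}(b), $U_p\equiv0\pmod{p^3}$. Finally, the prime $p=13$ is excluded from (4.12) automatically, since $\left(\frac{13}{13}\right)=0$.
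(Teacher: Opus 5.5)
There is a genuine gap: you never obtain the unit-root congruence at $\tau_\beta$, in either case. The closing chain $P_p(\alpha)\equiv\varepsilon\pi^2/e\equiv P_p(\beta)$ is therefore unsupported.

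In the first case, the obstruction you describe does not exist; it comes from a miscalculation. Since $\sqrt{-39}=96\tau_\beta-21$,
\[
\pi=r+s\sqrt{-39}=(r-21s)+96s\,\tau_\beta ,
\]
with the \emph{same} integer $s$. So Lemma \ref{lem:matrix} applies directly with $(A,B,C)=(96,-42,5)$, $e=1$, $d=r-21s$, $d^*=r+21s$. More generally, any $\tau$ with $A\tau^2+B\tau+C=0$ of discriminant $-156$ has $A\tau=-B/2+\sqrt{-39}$. Hence every element of $\Z[\sqrt{-39}]$ has the form $d+As\tau$, and no matching of ideal classes between $\tau_\alpha$ and $\tau_\beta$ is needed. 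Your guess about that matching is also off: $\tau_\beta$ is $SL_2(\Z)$-equivalent to a root of $[5,\pm2,8]$, not $[3,0,13]$. The detours through $\sqrt{-3}\,\pi$ and Atkin--Lehner interchanges should be discarded.

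In the second case you treat only $\tau_\alpha$. The missing step is the same: $\pi=3r+s\sqrt{-39}=(3r-21s)+96s\,\tau_\beta$, so $d^*=3r+21s$. Since $3\mid d$, $3\mid d^*$ and $3\mid A$, we get $3\mid a$, so $\gamma_\beta\in W_3$.

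You also need $p\nmid d^*$ at $\tau_\beta$, and this is where (4.12) is used a second time.
\begin{itemize}
\item In the first case, $p\mid r+21s$ together with $p=r^2+39s^2$ gives $p\mid 480s^2$.
\item In the second case, $p\mid 3r+21s$ together with $p=3r^2+13s^2$ gives $p\mid160s^2$.
\end{itemize}
Since $p\nmid s$ (otherwise $p\mid r$ too), this forces $p=5$. That prime is excluded by (4.12) because $\left(\frac53\right)=-1$. At $\tau_\alpha$ in the second case you likewise omitted the check $p\nmid 3r+3s$, which follows from $p\nmid 16s^2$.

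With these two $\beta$-side matrices, Lemma \ref{lem:unitroot} gives $P_p(\beta)\equiv\pi^2$ in the first case and $P_p(\beta)\equiv\chi(W_3)\pi^2/3$ in the second. These are the same values as at $\tau_\alpha$ for the same unit $\pi$, which is exactly the paper's argument. Only the constancy of $\chi$ on $W_3$ is needed, not its value.
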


\noindent\emph{Proof.}
By Lemma \ref{lem:forms156}, there are two cases.

\smallskip
\noindent\emph{Case 1: $p=r^2+39s^2$.}
Choose the sign of $s$ so that
\[
\pi=r+s\sqrt{-39}
\]
is a $p$-adic unit. Then $N(\pi)=p$. For $\tau_\alpha$, use
\[
(A,B,C)=(48,-6,1),\qquad d=r-3s.
\]
Then
\[
d+48s\tau_\alpha=\pi,\qquad d^*=r+3s.
\]
If $p\mid d^*$, then $r\equiv-3s\pmod p$ and the norm representation gives $0\equiv48s^2\pmod p$, impossible. Hence Lemma \ref{lem:matrix} gives $\gamma_\alpha\in\Gamma_0(6)$, and Lemmas \ref{lem:unitroot} and \ref{lem:tauab} give
\[
B_p(-\alpha)\equiv\pi^2\pmod{p^3}.
\]
For $\tau_\beta$, use
\[
(A,B,C)=(96,-42,5),\qquad d=r-21s.
\]
Then
\[
d+96s\tau_\beta=\pi,\qquad d^*=r+21s.
\]
If $p\mid d^*$, then the norm relation gives $0\equiv480s^2\pmod p$. The only prime $>3$ dividing $480$ is $5$, but $\left(\frac53\right)=-1$, contrary to (4.12). Hence again $\gamma_\beta\in\Gamma_0(6)$, so
\[
B_p(-\beta)\equiv\pi^2\pmod{p^3}.
\]
Therefore
\[
B_p(-\alpha)\equiv B_p(-\beta)\pmod{p^3}.
\]

\smallskip
\noindent\emph{Case 2: $p=3r^2+13s^2$.}
Choose
\[
\pi=3r+s\sqrt{-39}
\]
as the $p$-adic unit. Then $N(\pi)=3p$. For $\tau_\alpha$, take
\[
d=3r-3s,\qquad d^*=3r+3s.
\]
If $p\mid d^*$, then $r\equiv-s\pmod p$ and $0\equiv16s^2\pmod p$, impossible. Moreover $3\mid d,d^*$ and $3\mid48$, so Lemma \ref{lem:matrix} gives $\gamma_\alpha\in W_3$.

For $\tau_\beta$, take
\[
d=3r-21s,\qquad d^*=3r+21s.
\]
If $p\mid d^*$, then $r\equiv-7s\pmod p$ and $0\equiv160s^2\pmod p$. Again the only possible prime $>3$ is $5$, excluded by (4.12). Hence $\gamma_\beta\in W_3$.

The character has one constant value $\varepsilon_3=\chi(W_3)$ on this coset. Lemma \ref{lem:unitroot} therefore gives
\[
B_p(-\alpha)\equiv\frac{\varepsilon_3}{3}\pi^2\equiv B_p(-\beta)\pmod{p^3}.
\]
Finally
\[
B_p(-\alpha)=P_p(\alpha),\qquad B_p(-\beta)=P_p(\beta),
\]
which proves (4.14). \hfill$\square$

\noindent\emph{Proof of Theorem \ref{thm:lucas}.}
First let $p=13$. Since $\left(\frac{13}{3}\right)=1$ and $\left(\frac{13}{13}\right)=0$, only the modulo-$13^2$ assertion in part (i) is required, and this is Lemma \ref{lem:p13}.

Assume henceforth $p>3$, $p\ne13$. Suppose exactly one of $\left(\frac p3\right)$ and $\left(\frac p{13}\right)$ equals $1$. The other equals $-1$, and Lemma \ref{lem:reciprocity39} gives $\left(\frac{-39}{p}\right)=-1$. Proposition \ref{prop:vanish-alpha-beta} therefore gives
\[
P_p(\alpha)\equiv P_p(\beta)\equiv0\pmod{p^2}.
\]
Lemma \ref{lem:UpVp} yields
\[
U_p\equiv V_p\equiv0\pmod{p^2}.
\]
This proves the mixed-character case of part (i) and also part (ii), because (4.3) identifies $\left(\frac p{39}\right)=-1$ with the mixed-character condition.

It remains to suppose $\left(\frac p3\right)=\left(\frac p{13}\right)=1$. Proposition \ref{prop:split156} gives
\[
P_p(\alpha)\equiv P_p(\beta)\pmod{p^3}.
\]
By Lemma \ref{lem:UpVp}, $U_p\equiv0\pmod{p^3}$, which proves (1.25), and hence also (1.24). \hfill$\square$

\appendix
\section{Exact special-value calculations at points of complex multiplication}

This appendix gives the algebraic special-value calculations used in Lemmas \ref{lem:g}, \ref{lem:tau0}, \ref{lem:f}, and \ref{lem:tauab}.
Set
\[
\Delta(x)=(1-x)(1-9x),\qquad w(x)=\frac{x}{\Delta(x)},
\]
and
\[
N(x)=(1+3x)^3(1+225x-405x^2+243x^3)^3,
\qquad D(x)=x(1-x)^2(1-9x)^6.
\]
Maier's level-$6$ parametrization \cite{Maier} is
\[
j(x)=\frac{N(x)}{D(x)}.
\]
The exact algebraic relation between the classical $j$-invariant and the Domb coordinate $t=w(x)$ is
\begin{equation}
\Phi(J,T)=3^{-18}\Res_x(T\Delta(x)-x,JD(x)-N(x))\in\Z[J,T].
\tag{A.1}
\end{equation}
A direct resultant calculation gives
\begin{align}
\Phi(J,T)={}&J^2T^4-68719476736JT^{10}-38654705664JT^9-7549747200JT^8\notag\\
&-616562688JT^7-20316160JT^6-196608JT^5-1536JT^4-64JT^3-18JT^2-JT\notag\\
&+281474976710656T^{12}+211106232532992T^{11}+112150186033152T^{10}\notag\\
&+37383395344384T^9+9547712299008T^8+1720134402048T^7+229310988288T^6\notag\\
&+20082327552T^5+995033088T^4+23904256T^3+210432T^2+768T+1.
\tag{A.2}
\end{align}
The Hilbert class polynomials are
\begin{equation}
H_{-60}(J)=J^2-37018076625J+153173312762625
\tag{A.3}
\end{equation}
and
\begin{align}
H_{-156}(J)={}&J^4-109914552886955148J^3+53074935443801207676942J^2\notag\\
&-164993592496972989327022035J+421266000645144840703921125633.
\tag{A.4}
\end{align}
Exact elimination in $J$ gives
\begin{equation}
\Res_J(\Phi(J,T),H_{-60}(J))=(T+1)^2R_{60}(T),
\tag{A.5}
\end{equation}
where $R_{60}(-1)\ne0$, and
\begin{equation}
\Res_J(\Phi(J,T),H_{-156}(J))=(T^2+11T+1)^2R_{156}(T),
\tag{A.6}
\end{equation}
where
\[
\gcd(T^2+11T+1,R_{156}(T))=1.
\]
For completeness, the first quotient factors as
\begin{align*}
R_{60}(T)={}&(4096T^2-3328T+1)^2
(T^6+3006T^5-1071T^4+1487T^3+756T^2+45T+1)\\
&\times\bigl(4722366482869645213696T^{12}-14190120985212925161504768T^{11}\\
&\quad-18363331355772750844133376T^{10}-15961259537004264295497728T^9\\
&\quad-2572187542871548423569408T^8+7970548870563829383168T^7\\
&\quad+25316958834452085080064T^6+1667614367841597259776T^5\\
&\quad+36571179237846810624T^4-400921047540432896T^3\\
&\quad-120575319810048T^2-37018068480T+1\bigr).
\end{align*}
This makes $R_{60}(-1)\ne0$ immediate. For discriminant $-156$, the coprimality assertion in (A.6) can be checked by the Euclidean algorithm in $\Z[T]$.

It remains to identify which algebraic roots occur at the representatives $\tau_0,\tau_\alpha,\tau_\beta$. The eta quotient simplifies to
\begin{equation}
t(\tau)=q\prod_{n\ge1}(1+q^n)^6(1+q^{3n})^6,
\qquad q=e^{2\pi i\tau}.
\tag{A.7}
\end{equation}
Indeed, this follows directly from
\[
\eta(m\tau)=q^{m/24}\prod_{n\ge1}(1-q^{mn}).
\]
Let
\[
t_N(\tau)=q\prod_{n=1}^{N}(1+q^n)^6(1+q^{3n})^6.
\]
If $r=|q|<1$, then
\begin{equation}
\left|\log\frac{t(\tau)}{t_N(\tau)}\right|
\le6\left(
\frac{r^{N+1}}{(1-r)(1-r^{N+1})}
+
\frac{r^{3(N+1)}}{(1-r^3)(1-r^{3(N+1)})}
\right).
\tag{A.8}
\end{equation}
To obtain (A.8), use
\[
|\log(1+z)|\le\frac{|z|}{1-|z|},\qquad |z|<1,
\]
and sum the geometric tails.

For
\[
\tau_0=\frac{3+\sqrt{-15}}{24},\qquad
\tau_\alpha=\frac{3+\sqrt{-39}}{48},\qquad
\tau_\beta=\frac{21+\sqrt{-39}}{96},
\]
taking $N=150$ gives respectively
\[
E_{150}(\tau_0)<4\cdot10^{-66},\qquad
E_{150}(\tau_\alpha)<3\cdot10^{-53},\qquad
E_{150}(\tau_\beta)<3\cdot10^{-26},
\]
where $E_N$ denotes the right-hand side of (A.8). Evaluation of the finite products with directed interval arithmetic gives
\[
|t_{150}(\tau_0)+1|<10^{-60},
\]
\[
\left|t_{150}(\tau_\alpha)+\frac{11+3\sqrt{13}}2\right|<10^{-45},
\]
and
\[
\left|t_{150}(\tau_\beta)+\frac{11-3\sqrt{13}}2\right|<10^{-20}.
\]

\medskip
\noindent\textbf{Root separation.}
Write
\[
\Res_J(\Phi(J,T),H_{-60}(J))=(T+1)^2Q_{60}(T),
\]
\[
\Res_J(\Phi(J,T),H_{-156}(J))=(T^2+11T+1)^2Q_{156}(T),
\]
where $Q_{60},Q_{156}\in\Z[T]$. Consider the rectangles
\[
\mathcal B_{60}=
\left[-\frac{101}{100},-\frac{99}{100}\right]
+i\left[-\frac1{100},\frac1{100}\right],
\]
\[
\mathcal B_{\alpha}=
\left[-\frac{1091}{100},-\frac{1090}{100}\right]
+i\left[-\frac1{100},\frac1{100}\right],
\qquad
\mathcal B_{\beta}=
\left[-\frac1{10},-\frac2{25}\right]
+i\left[-\frac1{100},\frac1{100}\right].
\]
An exact Sturm--Cauchy count over $\Q$ gives
\[
N(Q_{60};\mathcal B_{60})=0,\qquad
N(Q_{156};\mathcal B_\alpha)=0,\qquad
N(Q_{156};\mathcal B_\beta)=0,
\]
where $N(P;\mathcal B)$ is the number of zeros of $P$ in $\mathcal B$, counted with multiplicity. Moreover,
\[
-1\in\mathcal B_{60},\qquad
-\alpha=\frac{-11-3\sqrt{13}}2\in\mathcal B_\alpha,\qquad
-\beta=\frac{-11+3\sqrt{13}}2\in\mathcal B_\beta.
\]
Thus the full elimination polynomials have, in these rectangles, only the zeros $-1$, $-\alpha$, and $-\beta$, respectively.

It remains to place the three values of $t$ in the corresponding rectangles. Let $E_N(\tau)$ denote the right-hand side of (A.8). The logarithmic estimate used in (A.8) gives a complex number $u$ such that
\[
\frac{t(\tau)}{t_N(\tau)}=e^u,\qquad |u|\le E_N(\tau),
\]
and therefore
\begin{equation}
|t(\tau)-t_N(\tau)|\le |t_N(\tau)|\bigl(e^{E_N(\tau)}-1\bigr).
\tag{A.9}
\end{equation}
Since the three values of $E_{150}$ displayed above are less than $1$, the inequality $e^x-1<2x$ for $0\le x<1$, together with the interval bounds for the finite products, gives
\[
|t(\tau_0)+1|<2\cdot10^{-60},\qquad
|t(\tau_\alpha)+\alpha|<2\cdot10^{-45},\qquad
|t(\tau_\beta)+\beta|<2\cdot10^{-20}.
\]
These bounds place $t(\tau_0)$, $t(\tau_\alpha)$, and $t(\tau_\beta)$ in $\mathcal B_{60}$, $\mathcal B_\alpha$, and $\mathcal B_\beta$, respectively. The factorizations (A.5)--(A.6) and the root counts above therefore give
\[
t(\tau_0)=-1,\qquad t(\tau_\alpha)=-\alpha,\qquad t(\tau_\beta)=-\beta.
\]
This establishes the special-value identifications used in Lemmas \ref{lem:tau0} and \ref{lem:tauab}.

For reference, the exact eliminations in this appendix can be reproduced in SageMath by the following calculation.
\begin{verbatim}
R.<x,J,T> = PolynomialRing(QQ)
Delta = (1-x)*(1-9*x)
N = (1+3*x)^3*(1+225*x-405*x^2+243*x^3)^3
D = x*(1-x)^2*(1-9*x)^6
g = 9*x^2-9*x+1
f = 81*x^4-81*x^3+9*x^2-9*x+1

H60 = J^2-37018076625*J+153173312762625
H156 = (J^4-109914552886955148*J^3
         +53074935443801207676942*J^2
         -164993592496972989327022035*J
         +421266000645144840703921125633)

res60 = g.resultant(J*D-N, x)
res156 = f.resultant(J*D-N, x)
Phi = (T*Delta-x).resultant(J*D-N, x) / 3^18
E60 = Phi.resultant(H60, J).factor()
E156 = Phi.resultant(H156, J).factor()
\end{verbatim}

\section{Acknowledgements}

I acknowledges the hospitality of the Departament of Mathematics at Cinvestav , where this work was carried out. 

\textit{Use of AI tools} ChatGPT Plus (GPT-$5.6$) was used as auxiliar tool for exploratory discussion , organization , checking of the proofs and mathematical arguments , biblographic checking , and editorial revision. The autor assumes full responsibility for the final text , claims , proofs , and references.


\begin{thebibliography}{99}

\bibitem{Beukers}
F. Beukers, \emph{Supercongruences using modular forms}, arXiv:2403.03301v3, 2025.

\bibitem{Cox}
D. A. Cox, \emph{Primes of the Form $x^2+ny^2$: Fermat, Class Field Theory, and Complex Multiplication}, 2nd ed., Wiley, 2013.

\bibitem{Deuring}
M. Deuring, Die Typen der Multiplikatorenringe elliptischer Funktionenk\"orper, \emph{Abh. Math. Sem. Hansischen Univ.} \textbf{14} (1941), 197--272.

\bibitem{Domb}
C. Domb, On the theory of cooperative phenomena in crystals, \emph{Advances in Physics} \textbf{9} (1960), 149--361.

\bibitem{Katz}
N. M. Katz, $p$-adic properties of modular schemes and modular forms, in \emph{Modular Functions of One Variable III}, Lecture Notes in Mathematics 350, Springer, 1973, 69--190.

\bibitem{Maier}
R. S. Maier, On rationally parametrized modular equations, \emph{J. Ramanujan Math. Soc.} \textbf{24} (2009), 1--73; arXiv:math/0611041.

\bibitem{Straub}
A. Straub, \emph{Gessel--Lucas congruences for sporadic sequences}, arXiv:2301.12248.

\bibitem{SunNew}
Z.-W. Sun, \emph{A new kind of numbers and related congruences}, arXiv:2607.07638, 2026.

\bibitem{SunOpen}
Z.-W. Sun, Open conjectures on congruences, \emph{Nanjing Univ. J. Math. Biquarterly} \textbf{36} (2019), no.~1, 1--99, Conjecture 76.

\bibitem{SunMO}
Z.-W. Sun, Curious congruences involving Domb numbers, MathOverflow, Question 497120, 3 July 2025.

\end{thebibliography}
\end{document}